\documentclass[reqno,12pt,a4paper]{amsart}
\usepackage[T1]{fontenc}
\usepackage[utf8]{inputenc}
\usepackage[english]{babel}
\usepackage{amsmath,amssymb,amsthm}
\usepackage{bm}
\usepackage[margin=25mm]{geometry}
\usepackage[hidelinks]{hyperref}
\newcommand{\cN}{\mathbb N}
\newcommand{\cZ}{\mathbb Z}
\newcommand{\PP}{\mathbb P}

\theoremstyle{plain}
\newtheorem{theorem}{Theorem}[section]
\newtheorem{lemma}{Lemma}[section]
\newtheorem{corollary}{Corollary}[section]
\theoremstyle{definition}

\theoremstyle{remark}
\newtheorem{remark}{Remark}
\numberwithin{equation}{section}

\newcommand{\cR}{\mathcal R}
\newcommand{\E}{\mathbb E}
\newcommand{\ind}{\mathbf 1}
\allowdisplaybreaks[2]

\title[Consecutive composite polynomial values]{Long intervals of consecutive\protect\\ composite values of polynomials}
\author{Artyom Radomskii}
\address{HSE University, Moscow, Russian Federation}
\email{artyom.radomskii@mail.ru}
\date{}
\subjclass[2020]{Primary 11N32; Secondary 11B05, 11N35}
\keywords{Prime values of polynomials, integer-valued polynomials,
consecutive composite values, long gaps, sieve methods}

\begin{document}
\begin{abstract}
Let $f\in\mathbb Q[t]$ be an irreducible integer-valued polynomial
of positive degree with positive leading coefficient. We show that
there is a constant $c_f>0$ such that, for every sufficiently large
real $X$, the interval $(X/2,X]$ contains a block of at least
$c_f\log X\log\log X$ consecutive integers $n$ for which
$f(n)$ is composite. The exponent of $\log\log X$ does not depend on
the degree of $f$. This improves a result of Ford and Gabdullin,
replacing a small fixed power of $\log\log X$ by
$\log\log X$. The result includes the case $f(t)=t^2+1$.
\end{abstract}
\maketitle

\section{Introduction}

For an integer-valued polynomial $f$, write
\[
 \Lambda_f=\{n\in\cN:f(n)\text{ is prime}\}.
\]
We study long intervals of consecutive integers disjoint from
$\Lambda_f$, and obtain lower bounds for the length of a block on
which all values of $f$ are composite.

For comparison, recall the conjectures on prime values of polynomials
with integer coefficients. Let $f\in\cZ[t]$ be irreducible over $\mathbb Q$, of degree
$m\ge2$, and with positive leading coefficient. Suppose that no prime
divides all the values $f(n)$, $n\in\cZ$. Equivalently, writing
\[
 \nu_f(p)=\#\{a\in\cZ/p\cZ:f(a)\equiv0\pmod p\},
\]
we require $\nu_f(p)<p$ for every prime $p$. Bouniakowsky's
conjecture~\cite{Bou} asserts that $\Lambda_f$ is infinite under these
hypotheses. No individual polynomial satisfying these hypotheses is
known to take infinitely many prime values; even the case
$f(t)=t^2+1$ remains open. For linear polynomials
with positive leading coefficient and no fixed prime divisor, the
corresponding infinitude follows from Dirichlet's theorem on primes in
arithmetic progressions.

The case $f(t)=t^2+1$ is usually called Landau's conjecture:
there should be infinitely many positive integers $n$ for which
$n^2+1$ is prime. Landau listed this question among four problems
on prime numbers in his 1912 address to the International Congress
of Mathematicians in Cambridge; see~\cite[p.~106]{Landau1912}.
Apart from $1^2+1=2$, prime values can occur only at even arguments:
$n^2+1$ is even and greater than $2$ for every odd $n>1$.

The conjecture of Bateman and Horn~\cite{Bateman.Horn,Bateman.Horn2}
predicts an asymptotic formula for the number of simultaneous prime
values of a finite collection of irreducible polynomials. For a single
polynomial satisfying the above hypotheses, the prediction is
\[
 \#\bigl(\Lambda_f\cap[1,X]\bigr)
 \sim \mathfrak S(f)\frac{X}{\log X},
 \qquad X\to\infty,
\]
where
\[
 \mathfrak S(f)
 =\frac1m\lim_{z\to\infty}
   \prod_{p\le z}\frac{1-\nu_f(p)/p}{1-1/p}>0.
\]
The local factors account for the residue classes on which $f$ is
not divisible by $p$, and the factor $1/m$ comes from
$\log f(n)\sim m\log n$. The conjecture therefore predicts an
average spacing of order $\log X$ between elements of $\Lambda_f$
in $(X/2,X]$. The intervals in Theorem~\ref{thm:main}
exceed this scale by a factor of order $\log\log X$.

We allow integer-valued polynomials with rational coefficients.
By the classical characterization of P\'olya~\cite{Polya}, a polynomial
$f\in\mathbb Q[t]$ of degree $m$ satisfies $f(\cZ)\subseteq\cZ$ if and
only if it has an expansion
\[
 f(t)=\sum_{j=0}^{m}a_j\binom tj,
 \qquad a_j\in\cZ,
\]
where
\[
 \binom t0=1,\qquad
 \binom tj=\frac{t(t-1)\cdots(t-j+1)}{j!}\quad(j\ge1).
\]
In particular, $m!f\in\cZ[t]$. After excluding the finitely many
primes dividing a common denominator, divisibility of the values of
$f$ is described by ordinary polynomial congruences. We can thus use
the same sieve as for polynomials in $\cZ[t]$.

A general approach to this problem was developed by Ford, Konyagin,
Maynard, Pomerance, and Tao \cite{FKMPT}. For each prime $p$, let $I_p\subseteq\cZ/p\cZ$
be a set of forbidden residue classes. Assuming that $|I_p|<p$, that
the cardinalities $|I_p|$ are bounded, that
\[
 \prod_{p\le x}\left(1-\frac{|I_p|}{p}\right)
 \sim\frac{c_I}{\log x}
 \qquad(c_I>0),
\]
and that the primes with $I_p\ne\varnothing$ have a positive relative
density, their theorem gives gaps of length at least $x(\log x)^\delta$
in the set
\[
 \{n\in\cZ:n\bmod p\notin I_p\text{ for every }p\le x\},
\]
for some $\delta>0$ depending only on that density. Taking polynomial root sets and omitting finitely many exceptional
primes gives an application to polynomial values.

To state the resulting polynomial bound, define, in the corrected
normalization of~\cite{FKMPT.corr},
\[
 C(\eta)=\sup\left\{
 \delta\in(0,1/2):
 \frac{6\cdot10^{2\delta}}{\log(1/(2\delta))}<\eta
 \right\}
 \qquad(\eta>0).
\]
For an irreducible integer-valued polynomial $f$ of degree $m\ge1$
with positive leading coefficient, the polynomial application
of~\cite{FKMPT,FKMPT.corr} gives, for every fixed
$0<\delta<C(1/m)$ and all sufficiently large $X$, an interval of
consecutive integers in $(X/2,X]$ of length at least
\[
 (\log X)(\log\log X)^\delta
\]
on which $f$ is composite. Here the admissible exponent depends on the
degree, and $C(1/m)$ decreases exponentially as $m$ grows.

Ford and Gabdullin~\cite{FG} subsequently proved the same assertion
for every $0<\delta<C(1)$ and every degree $m$. Since
$C(1)=1/834.109\ldots$, one may take $\delta=1/835$.
Their argument uses all the root classes at each available prime and
exploits additional properties of polynomial sieving systems.
We replace this small fixed power of $\log\log X$ by
$\log\log X$.

\begin{theorem}\label{thm:main}
Let $f\in\mathbb Q[t]$ be a polynomial of positive degree, irreducible
over $\mathbb Q$, with positive leading coefficient, and suppose that
$f(\cZ)\subseteq\cZ$. There exist constants $c_f>0$ and $X_0(f)>0$
such that, for every real $X\ge X_0(f)$, there are integers $N$ and
$L\ge1$ satisfying
\begin{equation}\label{eq:main}
 \{N+1,\ldots,N+L\}\subset (X/2,X],\qquad
 L\ge c_f\log X\log\log X,
\end{equation}
for which all the numbers $f(N+1),\ldots,f(N+L)$ are composite.
\end{theorem}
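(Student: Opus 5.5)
We follow the Erd\H{o}s--Rankin strategy in the form adapted to polynomial sieving. Put $x=c\log X$ for a small constant $c=c(f)>0$, let $P=\prod_{p\le x}p$, so that $P\le X^{1/4}$ once $c$ is small, and let $y=c'x\log\log x$ be the target length. Discard the finitely many primes dividing the denominator of $f$ (i.e.\ dividing $m!$), the discriminant, or the leading coefficient. For every remaining prime $p\le x$, let $R_p\subset\cZ/p\cZ$ be the set of roots of $f$ modulo $p$, so $|R_p|=\nu_f(p)\le m$. For each such $p$ we will choose a shift $b_p\in\cZ/p\cZ$ and forbid the classes $n\equiv b_p+r$, $r\in R_p$. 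Equivalently, $p\mid f(n+ b)$ for the translated sieve. The goal is to choose the $b_p$ so that every $n\in[1,y]$ lies in some forbidden class $b_p+R_p$. Then $f(N+n)$ is divisible by a prime $p\le x$ for all $1\le n\le y$. The Chinese remainder theorem gives $N\in(X/2,X/2+P]$ with $N\equiv -b_p\pmod p$ for each such $p$. Since $f(N+n)\gg X^{m}/2^m> x$, every value $f(N+n)$ is composite. The interval lies in $(X/2,X]$ because $P+y\le X/2$.

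The covering is done in stages. \emph{Stage 1 (smooth numbers).} For primes $p\in(x/\log x... ]$ in a middle range, this trick does not work: root classes are not $0$, so smooth numbers cannot be killed directly. Instead we use the residue structure. For primes $p$ with $\nu_f(p)\ge1$ (a set of positive relative density $\ge 1/m$ by Chebotarev), each prime removes a whole residue class, shifted by a root. \emph{Stage 2 (random greedy).} For primes in $(z,x/2]$ with $z=x^{c''/\log\log x}$, choose $b_p$ uniformly at random and independently. The survivors of $[1,y]$ after all primes $\le x/2$ have expected size
\[
 y\prod_{p\le x/2}\Bigl(1-\frac{\nu_f(p)}p\Bigr)\asymp \frac{y}{\log x}.
\]
This count is $\asymp c' x\log\log x/\log x$. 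We then need a concentration statement: the survivor set is not too structured, so that a large-prime stage can finish. \emph{Stage 3 (large primes $p\in(x/2,x]$).} Here we use the Maynard--Tao hypergraph covering lemma from \cite{FKMPT}. It is applied with all the $\nu_f(p)$ root classes of each prime, as in \cite{FG}. Each large prime is used to hit many survivors at once. A single class $b_p+R_p$ can contain up to $\sim\nu_f(p)y/p$ survivors on average. The covering lemma lets the number of survivors hit by the $\asymp x/\log x$ available large primes exceed the survivor count by a factor $\gg\log\log x$, once the weights of the multidimensional sieve concentrate.

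The main obstacle is exactly the gain of the full factor $\log\log X$, rather than $(\log\log X)^\delta$. In \cite{FKMPT,FG} the loss comes from the weights in the Maynard sieve: they capture only a tiny proportion of the root-class structure, and the covering lemma needs sums over prime tuples in residue classes. My plan for this step is to avoid the Maynard weights entirely. Instead we exploit that for polynomial systems the survivors from Stage 2 are essentially random. We can then apply a simpler Rankin-style count: in a random sieve, a set of size $y/\log x$ can be covered greedily by $\asymp x/\log x$ primes in $(x/2,x]$, each class hitting $\gg y/x$ survivors, provided $y\ll x\log\log x$. Concretely, we will first show by second-moment estimates that for most choices of $(b_p)_{p\le x/2}$, the survivor count is $\le 2\E$. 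We will also show that, for each large $p$ and each $b$, the survivors in $b+R_p$ number $\approx \nu_f(p)\,|\mathrm{survivors}|/p$. Then we choose the $b_p$ for large primes greedily, one prime at a time, each time removing at least the average number of survivors. This yields the bound $|\mathrm{survivors}|\prod_{x/2<p\le x}(1-\nu_f(p)/p)$, which is not enough by itself. To gain the extra $\log\log x$ in the choice of $y$, we must tune the Stage~1 range with $z=x^{1/\log\log x}$, so that the density of survivors becomes $\ll 1/(\log x\,\log\log x)$ relative to uniform. We expect this tuning, which controls the smooth part with $\nu_f(p)$ root classes via Rankin's bound for $\Psi$-type counts of integers all of whose prime factors hit root classes, to be the genuinely delicate step.
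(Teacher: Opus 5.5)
Your proposal has a genuine gap, and it also aims at the wrong scale. With $x=c\log X$ one has $\log\log X\sim\log x$, so the theorem requires covering an interval of length $\asymp x\log x$. Your choice $y=c'x\log\log x$ corresponds only to $\log X\log\log\log X$, which is weaker than the existing $(\log X)(\log\log X)^{\delta}$ bounds.

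At the correct scale $y\asymp x\log x$, random classes for all primes up to $x$ leave $\asymp y\,\sigma_f(x)\asymp x$ survivors. There are only $\asymp x/\log x$ primes in $(x/2,x]$, and a class $b+R_p$ meets on average $O_f(1)$ survivors. As you observe, the greedy choice only multiplies the survivor count by $\prod_{x/2<p\le x}(1-\nu_f(p)/p)=1-O(1/\log x)$. The proposed repair via Rankin-type smooth-number counts is exactly the device you yourself noted is unavailable when the root classes are not $0 \bmod p$. More fundamentally, any scheme that covers an interval of length $\asymp x\log x$ by residue classes modulo primes $\le x$ would, already for $f(t)=t$, beat the best known lower bounds for the Erd\H{o}s--Rankin covering problem. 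So the step you leave as ``genuinely delicate'' is not a tuning issue; it is the entire difficulty, and you give no argument for it.

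The missing idea, as in the paper, is to stop covering the survivors with assigned residue classes altogether. The paper takes $H=\lfloor\varepsilon_f x\log x\rfloor$ and chooses $b_0 \bmod Q(x)$ so that at most $\eta_f x$ positions survive all primes in $(p_0,x]$. It then looks at translates $N=b+Qt$, $1\le t\le e^x$, and proves that for some $t$ every surviving $f(N+s)$ has a \emph{naturally occurring} prime factor in $(x^{\alpha_f},\exp(x/(\log x)^5)]$.

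The mechanism for this is as follows.
\begin{itemize}
\item The weight is $w(t)=R(t)^2$. Here $R$ is a product over positions of divisor sums $\sum_{r\mid P}a(r)\psi_{i,r}(t)$ with $a(r)=-1/(B\log r)$, so the factor of an uncovered position vanishes exactly. The product is truncated to pairwise coprime tuples with $r_1\cdots r_k\le e^{x/4}$.
\item The local moment identities rest on the disjointness of shifted root sets modulo primes $p>x^{\alpha_f}$, which the resultant bound in the separation lemma guarantees.
\item Rankin's trick with $\beta=1/\log y$ controls the truncation, provided $k\le\eta_f x$.
\item All moduli involved are at most $e^{x/2}$, so complete averages transfer to $1\le t\le e^x$ with error $e^{-x/4+o(x)}$.
\end{itemize}
The extra freedom costs only a modulus $Qe^x=X^{2/3+o(1)}$, and this is why linearly many survivors can be handled. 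Nothing in your plan plays this role.
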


Theorem~\ref{thm:main} does not require the absence of a fixed prime
divisor. If a prime divides every value of $f$, all sufficiently large
positive values are composite, so the assertion is immediate.
The substantive case is when there is no fixed prime divisor.
Both $c_f$ and $X_0(f)$ may depend on the polynomial; the exponent
of $\log\log X$ in~\eqref{eq:main} is $1$ in every degree.

For Landau's polynomial, the theorem gives the following corollary.

\begin{corollary}\label{cor:squares-plus-one}
There exist absolute constants $c>0$ and $X_0>0$ such that, for every
real $X\ge X_0$, there are integers $N$ and $L\ge1$ satisfying
\[
 \{N+1,\ldots,N+L\}\subset(X/2,X],
 \qquad L\ge c\log X\log\log X,
\]
for which all the numbers
\[
 (N+1)^2+1,\ (N+2)^2+1,\ \ldots,\ (N+L)^2+1
\]
are composite.
\end{corollary}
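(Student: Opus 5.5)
The corollary is the special case $f(t)=t^2+1$ of Theorem~\ref{thm:main}, so the plan is to check the hypotheses of that theorem for this polynomial and then read off the conclusion; no new sieve input is needed.

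First, $f(t)=t^2+1$ has positive degree $2$ and leading coefficient $1>0$. It lies in $\cZ[t]$, so $f(\cZ)\subseteq\cZ$ holds trivially. It is irreducible over $\mathbb Q$: a reducible monic quadratic over $\mathbb Q$ would have a rational root $r$ with $r^2=-1$, which is impossible since $r^2\ge0$. (There is also no fixed prime divisor, because $f(0)=1$. This does not matter, since Theorem~\ref{thm:main} does not require it.)

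Applying Theorem~\ref{thm:main} with this $f$ gives constants $c_f>0$ and $X_0(f)$ such that, for every $X\ge X_0(f)$, there are integers $N$ and $L\ge1$ with $\{N+1,\ldots,N+L\}\subset(X/2,X]$ and $L\ge c_f\log X\log\log X$, and with $f(N+i)=(N+i)^2+1$ composite for $1\le i\le L$. Since $f$ is one fixed polynomial, we may set $c=c_f$ and $X_0=X_0(f)$; these are absolute constants, and the corollary follows.

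There is no real obstacle here. The only point that needs a word of justification is irreducibility, and the argument above settles it.
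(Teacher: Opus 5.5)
Your proposal is correct and follows the paper's own proof. Both check that $t^2+1\in\cZ[t]$ has positive leading coefficient and no rational root, hence is irreducible as a quadratic, and then apply Theorem~\ref{thm:main} with $c=c_f$ and $X_0=X_0(f)$.
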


\begin{proof}
The polynomial $f(t)=t^2+1$ belongs to $\cZ[t]$, has positive
leading coefficient, and has no rational root. Being quadratic,
it is irreducible over $\mathbb Q$, so Theorem~\ref{thm:main} applies.
\end{proof}

Thus the set $\{n\in\cN:n^2+1\text{ is prime}\}$ has gaps of length
at least $c\log X\log\log X$ in every sufficiently large interval
$(X/2,X]$. This conclusion is unconditional and does not require the
infinitude asserted by Landau's conjecture.

We adapt the short-translate construction of \cite{LG} to polynomial
values. The product cutoff is analogous to the simplex restriction
used in the Maynard--Tao sieve; see Maynard~\cite{Maynard}.
For polynomial values, we need different local factors and must count
the residue classes in each congruence condition when passing from
complete averages to averages over a finite interval.

In the construction of \cite{LG}, each auxiliary prime excludes a
single residue class for each linear form. For polynomial values, the
number of excluded classes varies with the prime and may be zero or
larger than one. After clearing denominators and omitting finitely
many exceptional primes, this number is the root count $\rho(p)$
used below. The local factors and the coefficients of the divisor
sums must therefore be adapted to these root counts.
Lemmas~\ref{lem:mertens} and~\ref{lem:summation} provide the
polynomial Mertens estimates and the uniform divisor-sum estimate
needed to control the resulting coefficients as the sieve parameters
grow.

There is also a separation issue. For distinct shifts of a linear
form, the excluded classes are disjoint as soon as the auxiliary
prime exceeds the distance between the shifts. For polynomial
shifts, this inequality alone does not ensure disjointness.
Writing $g$ for a fixed nonzero integer multiple of $f$ with integer
coefficients, a common root of $g(T+s_i)$ and $g(T+s_j)$ modulo $p$
forces $p$ to divide their resultant. Irreducibility ensures that
this resultant is nonzero when $s_i\ne s_j$.
Lemma~\ref{lem:separation} bounds these resultants uniformly over
the shifts in the interval and places all their prime divisors below
the auxiliary range. The resulting disjointness gives the local
moment identities used to estimate the diagonal and off-diagonal
contributions of the truncated weight.

Finally, the passage to a finite interval of translates requires
keeping track of the number of residue classes. For linear forms,
a compatible collection of divisibility conditions gives one class
modulo the product of the primes involved. Here it gives
$\prod_{p\mid q}\rho(p)$ classes modulo the corresponding squarefree
modulus $q$, so the endpoint errors must be multiplied by this
number. In Section~\ref{subsec:finite-averages}, we control this
factor together with the sum of the absolute values of the
coefficients in the indicator expansion. These estimates are
uniform when the number of remaining positions grows up to
$\eta_f x$, for a sufficiently small constant $\eta_f>0$.
Together with the coefficient and moment estimates, they extend
the short-translate construction to the polynomial setting in
Lemma~\ref{lem:translate}.

We first average over residue classes modulo the product of the small
primes. This leaves $O_f(x)$ uncovered positions in an interval of
length comparable to $x\log x$. A truncated product of divisor sums
then gives a parameter $t\le e^x$ for which each remaining position
has an auxiliary prime divisor. The truncation is what permits
averaging over this range of $t$: the full period of the divisibility
conditions is much larger.

\section{Notation}\label{sec:notation}

Let $\mathbb P$ denote the set of prime numbers. We write $\cN=\{1,2,\ldots\}$, $\cZ$, and $\mathbb Q$ for the positive
integers, the integers, and the rational numbers, respectively.
The letter $p$ denotes a prime; sums and products indexed by $p$
run over primes in the indicated range.
We denote the cardinality of a finite set $S$ by $|S|$ or $\#S$.
For integers $a,b$, $(a,b)$ denotes their nonnegative greatest common
divisor, while $\operatorname{lcm}(a,b)$ denotes their least common
multiple when $a,b$ are positive. We write $d\mid n$ for divisibility.
The function $\omega(n)$ counts the distinct prime divisors of a positive
integer $n$, with $\omega(1)=0$. A positive integer is
\emph{squarefree} if it is divisible by no square of a prime.
The M\"obius function $\mu$ is defined by $\mu(1)=1$,
$\mu(n)=(-1)^{\omega(n)}$ for squarefree $n$, and $\mu(n)=0$
otherwise.

All logarithms are natural, and $\gamma_E$ denotes Euler's constant.
For a real number $a$, we put $a_+=\max\{a,0\}$; the symbols
$\lfloor a\rfloor$ and $\lceil a\rceil$ denote its floor and ceiling.
The indicator $\ind_{\mathcal E}$ equals $1$ if the condition
$\mathcal E$ holds and $0$ otherwise. Empty sums are $0$, and empty
products are $1$. The length of an
interval of consecutive integers means the number of integers it contains.

For a nonnegative quantity $V$, the relations $U=O_f(V)$ and
$U\ll_f V$ mean that $|U|\le C V$ for a constant $C$ depending only
on $f$ and on fixed parameters chosen in terms of $f$.
For positive quantities, $U\asymp_f V$ means that both
$U\ll_f V$ and $V\ll_f U$ hold. The relation $U\sim V$ means
$U/V\to1$, and $o_f(1)$ denotes a quantity tending to zero with $f$
fixed. Unless another limiting process is specified, the growing
parameter is $x\to\infty$ (or $X\to\infty$ in the main theorem).
All assertions of uniformity apply to both constants and thresholds.
In the covering argument, these are independent of $H$, $S$, and
the integer $b$.

\section{Setup}
Fix $f$ as in Theorem~\ref{thm:main}, and put $m=\deg f$.
Choose an integer $D_f\ge1$ such that $g=D_ff\in\cZ[t]$.
The polynomial $g$ is still irreducible over $\mathbb Q$.
Choose a fixed $p_0$ larger than $2m$ and than every prime divisor of
$D_f$ and of the leading coefficient of $g$. For primes $p$, define
\[
 \rho(p)=
 \begin{cases}
 0,&p\le p_0,\\
 \#\{a\bmod p:g(a)\equiv0\pmod p\},&p>p_0.
 \end{cases}
\]
For $p>p_0$, reduction modulo $p$ preserves the degree of $g$, so
$0\le\rho(p)\le m<p$. Since $f(n)$ is an integer and $p\nmid D_f$,
\begin{equation}\label{eq:divisibility-equivalence}
 p\mid g(n)\quad\Longleftrightarrow\quad p\mid f(n)
 \qquad(n\in\cZ,\ p>p_0).
\end{equation}
Write
\[
 Q(v)=\prod_{p\le v}p,\qquad
 \sigma_f(v)=\prod_{p\le v}\left(1-\frac{\rho(p)}p\right).
\]

For a function $U$ on the residue classes modulo a prime $p$, we write
\[
 \E_p U=\frac1p\sum_{t=0}^{p-1}U(t).
\]
When $P$ is the squarefree product of auxiliary primes introduced below,
$\E U$ denotes the complete average of a $P$-periodic function:
\[
 \E U=\frac1P\sum_{t=0}^{P-1}U(t).
\]
This is distinct from the finite average $T^{-1}\sum_{t=1}^T U(t)$.

\section{Arithmetic preliminaries}

\begin{lemma}[Polynomial Mertens estimates]\label{lem:mertens}
For $v\ge2$,
\begin{equation}\label{eq:weighted-mertens}
 \sum_{p\le v}\frac{\rho(p)\log p}{p}=\log v+O_f(1).
\end{equation}
There is a constant $\mathfrak c_f>0$ such that
\begin{equation}\label{eq:polynomial-mertens}
 \sigma_f(v)=\frac{\mathfrak c_f}{\log v}
 \left(1+O_f\left(\frac1{\log v}\right)\right)
 \qquad(v\longrightarrow\infty).
\end{equation}
\end{lemma}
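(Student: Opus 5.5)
The plan is to derive both estimates from the prime ideal theorem for the number field $K=\mathbb Q(\alpha)$, where $\alpha$ is a root of $g$. This field has degree $m$ because $g$ is irreducible. For all but finitely many primes $p$ (those not dividing the index $[\mathcal O_K:\cZ[\alpha]]$, nor the discriminant of $g$, nor the leading coefficient), Dedekind's criterion shows that $\rho(p)$ equals the number of prime ideals $\mathfrak p\mid p$ of residue degree $1$. Enlarging the exceptional set only changes finitely many terms. Each such term is $O_f(1)$ in \eqref{eq:weighted-mertens}, because $\rho(p)\le m$, and affects $\sigma_f$ only by a fixed nonzero constant factor, because $\rho(p)<p$ for $p>p_0$. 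So we may compare $\rho$ directly with the degree-one prime ideals.

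Prime ideals of degree $\ge2$ have norm $p^k$ with $k\ge2$, and at most $m$ of them lie over each $p$. Hence
\[
 \sum_{p\le v}\frac{\rho(p)\log p}{p}=\sum_{N\mathfrak p\le v}\frac{\log N\mathfrak p}{N\mathfrak p}+O_f(1).
\]
The Mertens theorem for number fields, which follows from Landau's prime ideal theorem $\pi_K(v)=\operatorname{li}(v)+O_K(ve^{-c\sqrt{\log v}})$ by partial summation, gives $\log v+O_f(1)$ for the right-hand side. This proves \eqref{eq:weighted-mertens}.

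For \eqref{eq:polynomial-mertens}, I would write
\[
 \log\sigma_f(v)=-\sum_{p\le v}\frac{\rho(p)}p+O_f(1)\text{-convergent tail},
\]
since $\sum_p\bigl(\log(1-\rho(p)/p)+\rho(p)/p\bigr)$ converges absolutely with tail $O(1/v)$. What is needed is a sharp form $\sum_{p\le v}\rho(p)/p=\log\log v+B_f+O_f(1/\log v)$. To get it, set $E(u)=\sum_{p\le u}\rho(p)\log p/p-\log u$, which is $O_f(1)$ by the first part, and apply partial summation with weight $1/\log u$:
\[
 \sum_{p\le v}\frac{\rho(p)}p=\int_{2^-}^{v}\frac{d(\log u+E(u))}{\log u}=\log\log v+C+\frac{E(v)}{\log v}+\int_2^v\frac{E(u)}{u\log^2u}\,du.
\]
The last integral converges, and its tail beyond $v$ is $O(1/\log v)$. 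This yields the constant $B_f$ with error $O_f(1/\log v)$. Exponentiating gives $\sigma_f(v)=\mathfrak c_f(\log v)^{-1}(1+O_f(1/\log v))$ with $\mathfrak c_f=e^{-B_f+\text{const}}>0$. Positivity holds because every factor $1-\rho(p)/p$ is positive and the log-series converges.

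The main obstacle is the first step: justifying that $\rho(p)$ agrees with the count of degree-one primes except at finitely many $p$, and invoking the prime ideal theorem with error good enough for an $O(1)$ Mertens remainder. An alternative that avoids ideals is to use the Chebotarev density theorem in the splitting field of $g$. There $\rho(p)$ is the number of fixed points of Frobenius on the roots, whose average over the Galois group is $1$ by Burnside's lemma and transitivity, which in turn comes from irreducibility. Either route needs only a de la Vallée Poussin–type error term, and no explicit constants.
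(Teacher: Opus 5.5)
Your proposal is correct and is essentially the paper's argument. The paper also rests on Landau's prime ideal theorem, quoting it directly in the root-counting form $\sum_{p\le v}\rho(p)=\operatorname{li}(v)+O_f(ve^{-c\sqrt{\log v}})$ where you derive it via Dedekind's criterion, and then performs the same partial summations and the same logarithm/exponentiation step with the absolutely convergent $O_f(p^{-2})$ correction. One small point needs tidying: $g$ need not be monic, so apply Dedekind's criterion to the algebraic integer $a_m\alpha$ (with $a_m$ the leading coefficient of $g$), a root of the monic polynomial $a_m^{m-1}g(T/a_m)\in\cZ[T]$, whose roots modulo any $p\nmid a_m$ correspond bijectively to those of $g$.
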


\begin{proof}
Let $g_0\in\mathbb Z[T]$ be the primitive part of $g$, obtained
by dividing $g$ by the positive greatest common divisor of its
coefficients. Since $g$ is irreducible over $\mathbb Q$, the
polynomial $g_0$ is irreducible in $\mathbb Z[T]$. For each prime
$p$, put
\[
\nu(p)=\#\{a\in\mathbb Z/p\mathbb Z:g_0(a)\equiv0\pmod p\}.
\]
The prime ideal theorem gives the following estimate; see
Landau~\cite{Landau1903} and, for the root-counting formulation,
\cite[Theorem~3.2.1, p.~36]{CM}. For some constant $c>0$ depending
only on $g_0$, we have
\[
\sum_{p\le v}\nu(p)
=\operatorname{li}(v)
 +O_f\!\left(v e^{-c\sqrt{\log v}}\right),
\qquad
\operatorname{li}(v)=\int_2^v\frac{dt}{\log t}.
\]
Every prime dividing the content of $g$ divides its leading
coefficient and is therefore at most $p_0$. Thus, for $p>p_0$,
the reductions of $g$ and $g_0$ have the same roots and
$\rho(p)=\nu(p)$. The convention $\rho(p)=0$ for $p\le p_0$
changes the sum by $O_f(1)$. Enlarging the implied constant,
we obtain
\[
B(v):=\sum_{p\le v}\rho(p)
=\operatorname{li}(v)+R_f(v),
\qquad
R_f(v)=O_f\!\left(v e^{-c\sqrt{\log v}}\right)
\qquad(v\ge2).
\]

Partial summation with the weight $(\log t)/t$ gives
\[
\sum_{p\le v}\frac{\rho(p)\log p}{p}
=\frac{B(v)\log v}{v}
 +\int_2^v B(t)\frac{\log t-1}{t^2}\,dt.
\]
For the contribution of $\operatorname{li}(t)$, use
$\operatorname{li}(2)=0$ and
$\operatorname{li}'(t)=1/\log t$. Integration by parts yields
\[
\frac{\operatorname{li}(v)\log v}{v}
+\int_2^v\operatorname{li}(t)\frac{\log t-1}{t^2}\,dt
=\int_2^v\frac{dt}{t}
=\log v-\log2.
\]
Hence
\[
\sum_{p\le v}\frac{\rho(p)\log p}{p}
=\log v-\log2
 +\frac{R_f(v)\log v}{v}
 +\int_2^v R_f(t)\frac{\log t-1}{t^2}\,dt.
\]
The endpoint error is bounded, since
\[
\frac{R_f(v)\log v}{v}
=O_f\!\left((\log v)e^{-c\sqrt{\log v}}\right)
=O_f(1).
\]
The integral error is bounded uniformly in $v$ as well:
\[
\int_2^v
 \left|R_f(t)\frac{\log t-1}{t^2}\right|\,dt
\ll_f
\int_2^\infty e^{-c\sqrt{\log t}}\frac{\log t+1}{t}\,dt
<\infty.
\]
Indeed, the substitution $u=\sqrt{\log t}$ transforms the last
integral into
\[
2\int_{\sqrt{\log2}}^\infty e^{-cu}(u^3+u)\,du,
\]
which converges. Therefore
\[
\sum_{p\le v}\frac{\rho(p)\log p}{p}
=\log v+O_f(1),
\]
proving \eqref{eq:weighted-mertens}.

For the product estimate, set
\[
 W(v)=\sum_{p\le v}\frac{\rho(p)\log p}{p}
     =\log v+E_f(v),
 \qquad E_f(v)=O_f(1).
\]
Partial summation gives
\[
 \begin{aligned}
 \sum_{p\le v}\frac{\rho(p)}p
 &=
 \frac{W(v)}{\log v}
 +\int_2^v\frac{W(t)}{t(\log t)^2}\,dt\\
 &=
 1+\frac{E_f(v)}{\log v}
 +\log\log v-\log\log2
 +\int_2^v\frac{E_f(t)}{t(\log t)^2}\,dt.
 \end{aligned}
\]
Since $E_f(t)$ is bounded, the integral
\[
 \int_2^\infty\frac{E_f(t)}{t(\log t)^2}\,dt
\]
converges absolutely. Furthermore,
\[
 \int_v^\infty\frac{dt}{t(\log t)^2}
 =\frac1{\log v}.
\]
Hence, on defining
\[
 c'_f
   =1-\log\log2
    +\int_2^\infty\frac{E_f(t)}{t(\log t)^2}\,dt,
\]
we obtain
\[
 \sum_{p\le v}\frac{\rho(p)}p
 =\log\log v+c'_f+O_f(1/\log v).
\]

By the choice of $p_0$, we have $0\le\rho(p)/p<1/2$ for every
prime $p$. Consequently,
\[
 \varepsilon_p
   :=\log\left(1-\frac{\rho(p)}p\right)+\frac{\rho(p)}p
   =O_f(p^{-2}).
\]
Thus $\sum_p\varepsilon_p$ converges absolutely, and
\[
 \sum_{p>v}|\varepsilon_p|
 \ll_f\sum_{n>v}\frac1{n^2}
 \ll_f\frac1v.
\]
Taking logarithms in the definition of $\sigma_f(v)$, we find
\[
 \begin{aligned}
 \log\sigma_f(v)
 &=
 \sum_{p\le v}\log\left(1-\frac{\rho(p)}p\right)\\
 &=
 -\sum_{p\le v}\frac{\rho(p)}p
 +\sum_{p\le v}\varepsilon_p\\
 &=
 -\log\log v+c''_f+O_f(1/\log v),
 \end{aligned}
\]
where
\[
 c''_f=-c'_f+\sum_p\varepsilon_p.
\]
Exponentiating and using $e^u=1+O(u)$ as $u\to0$, we obtain
\[
 \sigma_f(v)
 =\frac{e^{c''_f}}{\log v}
   \left(1+O_f\left(\frac1{\log v}\right)\right).
\]
This proves \eqref{eq:polynomial-mertens}, with
$\mathfrak c_f=e^{c''_f}>0$.
\end{proof}

\begin{lemma}[Separation of shifted root sets]\label{lem:separation}
Put $\alpha_f=m^2+6$. For all sufficiently large $x$, every
$1\le H\le x(\log x)^2$, and every prime $p>x^{\alpha_f}$,
the root sets modulo $p$ of the polynomials $g(u+s)$, as $s$ ranges
over distinct integers in $[1,H]$, are pairwise disjoint.
\end{lemma}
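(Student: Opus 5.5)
The approach is through resultants. Fix distinct integers $s_i\ne s_j$ in $[1,H]$ and put $h=s_j-s_i$, a nonzero integer with $|h|<H$. If $a$ is a common root modulo $p$ of $g(u+s_i)$ and $g(u+s_j)$, then $b=a+s_i$ is a common root modulo $p$ of $g(u)$ and $g(u+h)$. Hence $p$ divides the resultant
\[
R(h)=\operatorname{Res}_u\bigl(g(u),g(u+h)\bigr)\in\cZ ,
\]
because both polynomials have integer coefficients and leading coefficient $\operatorname{lc}(g)$, which is not divisible by $p$ (here $p>x^{\alpha_f}>p_0$). It therefore suffices to show two things: $R(h)\ne0$ for $h\ne0$, and $0<|R(h)|<x^{\alpha_f}$ for $1\le|h|\le H\le x(\log x)^2$ and $x$ large. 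A prime $p>x^{\alpha_f}$ then cannot divide $R(h)$, and the root sets are disjoint.

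For nonvanishing, we argue as follows. Let $\theta_1,\dots,\theta_m$ be the complex roots of $g$. Then
\[
R(h)=\operatorname{lc}(g)^{2m}\prod_{k,l}(\theta_k-\theta_l+h),
\]
up to sign. If $R(h)=0$, then $g(u)$ and $g(u+h)$ share a root; since both are irreducible over $\mathbb Q$ of the same degree, they are associated, and comparing leading coefficients gives $g(u+h)=g(u)$. Then the root set of $g$ is invariant under translation by $-h\ne0$, which is impossible for a finite nonempty set because $m\ge1$. Hence $R(h)\ne0$. (Alternatively, $g(u+h)-g(u)$ is a nonzero polynomial whenever $m\ge1$ and $h\ne0$.)

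For the size bound, the product formula gives $|R(h)|\le |\operatorname{lc}(g)|^{2m}\prod_{k,l}(|h|+2M)$, where $M=\max_k|\theta_k|$ depends only on $f$. Thus $|R(h)|\ll_f (|h|+1)^{m^2}\le (2x(\log x)^2)^{m^2}$, and this is $<x^{m^2+6}$ once $x$ is large, since $(\log x)^{2m^2}2^{m^2}C_f<x^6$ eventually. (Even the weaker $\alpha=m^2+1$ would suffice; the paper's $\alpha_f=m^2+6$ leaves room.) The main point requiring care is the nonvanishing of the resultant, which uses irreducibility together with the fact that a finite set has no nonzero translation invariance. Once that is settled, the remaining bound is routine.
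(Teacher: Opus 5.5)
Your proof is correct and follows essentially the same route as the paper. The shared steps are the resultant $\operatorname{Res}_T\bigl(g(T),g(T+h)\bigr)$, its nonvanishing for $h\ne0$ via irreducibility (the paper uses exactly your alternative, the nonzero $T^{m-1}$ coefficient $ma_mh$ of $g(T+h)-g(T)$), and the size bound $\ll_f(1+|h|)^{m^2}<x^{m^2+6}$. The only minor difference is how $p$ is shown to divide the resultant: the paper builds an explicit identity $A(T,h)g(T)+B(T,h)g(T+h)=\operatorname{Res}_T\bigl(g(T),g(T+h)\bigr)$ with $A,B\in\mathbb Z[T,h]$, whereas you reduce the Sylvester determinant modulo $p$, using that the leading coefficient is a unit there; both are valid.
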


\begin{proof}
We shall use the following properties of the resultant.

Let $P,Q\in R[T]$ be polynomials of positive degrees $r,s$,
respectively, where $R$ is an integral domain.
The Sylvester matrix $S(P,Q)$ is the $(r+s)\times(r+s)$ matrix
whose rows are the coefficient vectors of
\[
T^{s-1}P,\ T^{s-2}P,\ \ldots,\ P,
\qquad
T^{r-1}Q,\ T^{r-2}Q,\ \ldots,\ Q,
\]
with the coefficients arranged according to the descending
powers
\[
T^{r+s-1},\ T^{r+s-2},\ \ldots,\ T,\ 1.
\]
The resultant of $P$ and $Q$ with respect to $T$ is defined by
\[
\operatorname{Res}_T(P,Q)=\det S(P,Q).
\]
Since a determinant is a polynomial with integer coefficients
in the matrix entries, the resultant belongs to $R$.
In particular, it is an integer when $R=\mathbb Z$, and it
belongs to $\mathbb Z[h]$ when the coefficients of $P$ and $Q$
belong to $\mathbb Z[h]$.

Over a field, write
\[
P(T)=a\prod_{i=1}^{r}(T-\alpha_i),
\qquad
Q(T)=b\prod_{j=1}^{s}(T-\beta_j),
\]
where the roots are taken in an algebraic closure and are
listed with multiplicity. The root formula for the resultant is
\[
\operatorname{Res}_T(P,Q)
=a^s\prod_{i=1}^{r}Q(\alpha_i)
=a^s b^r
 \prod_{i=1}^{r}\prod_{j=1}^{s}(\alpha_i-\beta_j).
\]
Consequently, the resultant vanishes if and only if $P$ and $Q$
have a common root in an algebraic closure, or equivalently,
a common factor of positive degree over the coefficient field.

The resultant satisfies the identity
\[
U(T)P(T)+V(T)Q(T)=\operatorname{Res}_T(P,Q),
\qquad U,V\in R[T].
\]
To prove this over the coefficient ring, put $N=r+s$ and write
\[
S=S(P,Q)=(s_{ik})_{1\le i,k\le N}.
\]
Recall that its rows are the coefficient vectors of
\[
T^{s-1}P,\ldots,P,\qquad T^{r-1}Q,\ldots,Q,
\]
with coefficients arranged according to
$T^{N-1},T^{N-2},\ldots,T,1$.

For each $1\le i\le N$, let $S^{(i,N)}$ be the matrix obtained
from $S$ by deleting its $i$th row and its last column.
Define
\[
\gamma_i=(-1)^{i+N}\det S^{(i,N)}.
\]
Thus $\gamma_i$ is the cofactor of the entry $s_{iN}$.
All the $\gamma_i$ belong to $R$, since they are determinants
of matrices with entries in $R$.

We claim that
\[
\sum_{i=1}^{N}\gamma_i s_{ik}
=
\begin{cases}
0,&1\le k<N,\\
\det S,&k=N.
\end{cases}
\]
For $k=N$, this is precisely the expansion of $\det S$
along its last column.

For $k<N$, let $S_k$ be the matrix obtained from $S$ by
replacing its last column with its $k$th column. The matrix
$S_k$ has two equal columns, so $\det S_k=0$.
Moreover, the cofactors of the entries in its last column
are still $\gamma_1,\ldots,\gamma_N$: deleting that column
removes the only column that was changed. Expanding
$\det S_k$ along its last column therefore gives
\[
0=\det S_k=\sum_{i=1}^{N}\gamma_i s_{ik}.
\]
This proves the claim. Equivalently,
\[
(\gamma_1,\ldots,\gamma_N)S
=(0,\ldots,0,\det S).
\]

Introduce the column vector
\[
\boldsymbol z(T)
=
\begin{pmatrix}
T^{N-1}\\
T^{N-2}\\
\vdots\\
T\\
1
\end{pmatrix}.
\]
By the definition of the rows of $S$, we have
\[
S\boldsymbol z(T)
=
\begin{pmatrix}
T^{s-1}P(T)\\
\vdots\\
P(T)\\
T^{r-1}Q(T)\\
\vdots\\
Q(T)
\end{pmatrix}.
\]
Multiplying the preceding row identity by $\boldsymbol z(T)$,
we obtain
\[
\sum_{i=1}^{s}\gamma_i T^{s-i}P(T)
+
\sum_{j=1}^{r}\gamma_{s+j}T^{r-j}Q(T)
=\det S.
\]

Consequently, the explicit choices
\[
U(T)=\sum_{i=1}^{s}\gamma_i T^{s-i},
\qquad
V(T)=\sum_{j=1}^{r}\gamma_{s+j}T^{r-j}
\]
satisfy
\[
U(T)P(T)+V(T)Q(T)
=\det S
=\operatorname{Res}_T(P,Q).
\]
The formulas also show that $\deg U<s$ and $\deg V<r$
whenever these polynomials are nonzero.

No division in $R$ is needed, so the identity also holds when
$\det S=0$. In particular, if $P,Q\in\mathbb Z[T]$, then
$U,V\in\mathbb Z[T]$. If the coefficients of $P,Q$ belong
to $\mathbb Z[h]$, the same argument gives
$U,V\in\mathbb Z[h][T]=\mathbb Z[T,h]$.

Apply these identities to
\[
g(T)=a_mT^m+\cdots+a_0,\qquad a_m\ne0.
\]
Define
\[
\mathcal V(h)
=\operatorname{Res}_T\bigl(g(T),g(T+h)\bigr).
\]
The coefficients of $g(T+h)$, viewed as a polynomial in $T$,
belong to $\mathbb Z[h]$. Hence
\[
\mathcal V(h)\in\mathbb Z[h].
\]
If $\alpha_1,\ldots,\alpha_m$ are the complex roots of $g$,
the roots of $g(T+h)$ are $\alpha_1-h,\ldots,\alpha_m-h$.
Both polynomials have leading coefficient $a_m$, so the
root formula gives
\[
\mathcal V(h)
=a_m^{2m}
 \prod_{i=1}^{m}\prod_{j=1}^{m}
 (h+\alpha_i-\alpha_j).
\]
Thus $\mathcal V$ has degree at most $m^2$ in $h$, and,
since $g$ is fixed in terms of $f$,
\begin{equation}\label{eq:resultant-size}
|\mathcal V(h)|\ll_f(1+|h|)^{m^2}.
\end{equation}

We next show that
\[
\mathcal V(h)\ne0
\qquad(h\in\mathbb Z\setminus\{0\}).
\]
Suppose otherwise. Then $g(T)$ and $g(T+h)$ have a common
factor of positive degree over $\mathbb Q$.
Since $g(T)$ is irreducible over $\mathbb Q$, it must divide
$g(T+h)$. These polynomials have the same degree and the
same leading coefficient, so they must be equal.
However, the coefficient of $T^{m-1}$ in their difference
$g(T+h)-g(T)$ is
\[
ma_mh,
\]
which is nonzero. This contradiction proves the claim.

The resultant identity over the coefficient ring $\mathbb Z[h]$
provides polynomials $A,B\in\mathbb Z[T,h]$ such that
\[
A(T,h)g(T)+B(T,h)g(T+h)=\mathcal V(h).
\]
Therefore, for any integers $u_0,h$ and any prime $p$,
\[
p\mid g(u_0),\qquad p\mid g(u_0+h)
\quad\Longrightarrow\quad
p\mid\mathcal V(h).
\]
Indeed, evaluating the identity at $(T,h)=(u_0,h)$ gives
\[
\mathcal V(h)
=A(u_0,h)g(u_0)+B(u_0,h)g(u_0+h),
\]
whose right-hand side is divisible by $p$, since both
$A(u_0,h)$ and $B(u_0,h)$ are integers.

Suppose now that, for distinct integers $s_i,s_j\in[1,H]$,
the polynomials $g(T+s_i)$ and $g(T+s_j)$ have a common
root modulo $p$. Choose an integer representative $t_0$
of this root. Then
\[
p\mid g(t_0+s_i),
\qquad
p\mid g(t_0+s_j).
\]
Put
\[
u_0=t_0+s_i,
\qquad
h=s_j-s_i.
\]
We have $h\ne0$, $|h|\le H-1$, and
\[
t_0+s_j=u_0+h.
\]
The preceding divisibility implication therefore yields
\[
p\mid\mathcal V(s_j-s_i).
\]

On the other hand, \eqref{eq:resultant-size} and
$H\le x(\log x)^2$ give
\[
0<|\mathcal V(s_j-s_i)|
\ll_f(1+H)^{m^2}
\ll_f x^{m^2}(\log x)^{2m^2}< x^{m^{2}+6}
\]
for all sufficiently large $x$.
Recalling that $\alpha_f=m^2+6$ and $p>x^{\alpha_f}$,
we obtain
\[
 0<|\mathcal V(s_j-s_i)|<p.
\]
This contradicts $p\mid\mathcal V(s_j-s_i)$. Hence the shifted
root sets are pairwise disjoint, as required.
\end{proof}

For $z>p_0$, let $h_z$ be the multiplicative function supported on
squarefree integers given by
\[
 h_z(1)=1,\qquad
 h_z(p)=
 \begin{cases}
 \rho(p)/(p-\rho(p)),&p>z,\\
 0,&p\le z.
 \end{cases}
\]
Thus $h_z(n)=0$ if $n$ is not squarefree or has a prime divisor at
most $z$.

\begin{lemma}[A uniform divisor-sum estimate]\label{lem:summation}
Uniformly for $v\ge2$ and all sufficiently large $z$,
\begin{equation}\label{eq:summation}
 M_z(v):=\sum_{r\le v}h_z(r)=V_f(z)\log v+O_f(1),
\end{equation}
where, with $\gamma_E$ denoting Euler's constant,
\begin{equation}\label{eq:V}
 V_f(z)=\frac{e^{-\gamma_E}}{\mathfrak c_f}\sigma_f(z)
 =\frac{e^{-\gamma_E}}{\log z}
       \left(1+O_f\left(\frac1{\log z}\right)\right).
\end{equation}
\end{lemma}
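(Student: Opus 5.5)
For $v\le z$ the estimate \eqref{eq:summation} is trivial. In that range only $r=1$ contributes, so $M_z(v)=1$. By \eqref{eq:V} we have $V_f(z)\log v\le V_f(z)\log z=e^{-\gamma_E}+O_f(1/\log z)$, so the difference is $O_f(1)$. The work is for $v>z$, and I would prove it by the standard logarithmic-weight (Levin--Fainleigh / Halberstam--Richert) argument, keeping every error term independent of $z$.

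\emph{Step 1: the constant.} The value of $V_f(z)$ is forced by the Dirichlet series $F_z(s)=\sum_r h_z(r)r^{-1-s}$, compared with $\zeta(1+s)$. Since $1+h_z(p)=(1-\rho(p)/p)^{-1}$ for $p>z$, the residue at $s=0$ should be
\[
\prod_{p\le z}\Bigl(1-\frac1p\Bigr)\prod_{p>z}\Bigl(1-\frac1p\Bigr)\Bigl(1-\frac{\rho(p)}p\Bigr)^{-1}
=\sigma_f(z)\lim_{y\to\infty}\frac{\prod_{p\le y}(1-1/p)}{\sigma_f(y)}
=\frac{e^{-\gamma_E}}{\mathfrak c_f}\sigma_f(z).
\]
Here the last equality uses Mertens' theorem together with \eqref{eq:polynomial-mertens}. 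This identifies $V_f(z)$. The second form in \eqref{eq:V} then follows directly from \eqref{eq:polynomial-mertens}.

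\emph{Step 2: the integral equation.} Write $\log r=\sum_{p\mid r}\log p$ for squarefree $r$. This gives
\[
\sum_{r\le v}h_z(r)\log r=\sum_{z<p\le v}h_z(p)\log p\,M_z(v/p)-\sum_{z<p\le v}h_z(p)^2\log p\sum_{r'\le v/p^2}h_z(r')\cdots .
\]
The correction removes the terms with $p\mid r'$ and is bounded by $\sum_{p>z}h_z(p)^2\log p\cdot M_z(v)\ll_f M_z(v)/z$. Partial summation gives the left side as $M_z(v)\log v-\int_1^v M_z(t)\,dt/t$. On the right I would use \eqref{eq:weighted-mertens} in the form
\[
\sum_{z<p\le w}h_z(p)\log p=\log(w/z)+O_f(1)\qquad\text{uniformly in } z,
\]
which holds because $h_z(p)=\rho(p)/p+O_f(p^{-2})$.

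\emph{Step 3: bootstrap.} Insert $M_z(t)=V_f(z)\log t+E_z(t)$, with $E_z(t)=1$ for $t\le z$. The main terms cancel exactly, because $V_f(z)$ is the correct constant from Step 1. One is left with a relation of the shape
\[
E_z(v)\log v=\int_1^v E_z(t)\frac{dt}{t}-\sum_{z<p\le v}h_z(p)\log p\,E_z(v/p)+O_f\bigl(1+V_f(z)\log v\bigr)+O_f(M_z(v)/z).
\]
An a priori bound is available from Rankin's trick:
\[
M_z(v)\le\prod_{z<p\le v}(1+h_z(p))\ll_f\frac{\log v}{\log z}.
\]
Together with the Mertens increments, an inductive argument on $v$ (over dyadic ranges $v\in(z^{k},z^{k+1}]$) then shows $|E_z(v)|\le C_f$.

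\emph{Main obstacle.} The hard step is this bootstrap, and specifically uniformity in $z$. The $O_f(1)$ error in the Mertens-type estimate for $\sum_{z<p\le w}h_z(p)\log p$ must not accumulate over the roughly $\log v/\log z$ iterations. The weight $V_f(z)\asymp 1/\log z$ multiplying $\log v$ is what should absorb these losses. Concretely, the error $O_f(1)$ on the right-hand side, divided by $\log v$, contributes only $O_f(1/\log v)$ per scale. What I would try first is to run the argument with the bounded function $E_z(t)$ and with the Stieltjes measure $d\bigl(\sum_{z<p\le t}h_z(p)\log p-\log(t/z)\bigr)$, whose total variation is $O_f(1)$. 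Integrating by parts against it then bounds the prime-sum term by $O_f(\sup_{t\le v}|E_z(t)|)$ plus admissible terms, and this closes the induction.
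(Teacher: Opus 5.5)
\textbf{The gap is in Step~3.} You flag this step yourself, and it is the whole content of the lemma, but the mechanism you describe does not work.

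First, the identity. Written correctly, Step~2 gives
\[
M_z(v)\log v=\int_1^vM_z(t)\frac{dt}{t}+\sum_{z<p\le v}h_z(p)\log p\,M_z(v/p)+O_f\Bigl(\frac{M_z(v)}{z}\Bigr),
\]
with a plus sign on the prime sum. Substituting $c\log t$ cancels the $(\log v)^2$ terms for \emph{every} $c$, so the constant is invisible at this level. The main terms therefore do not ``cancel exactly because $V_f(z)$ is correct''. With $c=V_f(z)$, a term $-V_f(z)\log z\,\log v+\tfrac12V_f(z)(\log z)^2$ survives, and it is of exact order $\log v$.

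Second, and more seriously, the prime-sum term. Write $\sum_{z<p\le w}h_z(p)\log p=\log(w/z)+R_z(w)$. The prime-sum term is then $\int_1^{v/z}E_z(t)\,dt/t+\int_z^vE_z(v/w)\,dR_z(w)$.
\begin{itemize}
\item Only the function $R_z$ is bounded. The measure $dR_z$ has total variation $2\log(v/z)+O_f(1)$, not $O_f(1)$.
\item Integration by parts bounds the second piece by the variation of $E_z$, which is $O_f(M_z(v)+V_f(z)\log v)$.
\item It leaves the first piece untouched, and that piece can be as large as $\sup|E_z|\cdot\log(v/z)$.
\end{itemize}
Taking absolute values, all you get for $\phi(s)=|E_z(e^s)|$ is
\[
s\phi(s)\le\int_0^s\phi(\tau)\,d\tau+\int_0^{s-\log z}\phi(\tau)\,d\tau+Cs .
\]
But $\phi(s)=C(1+2s/\log z)$ satisfies this, with equality for $s\ge\log z$. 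So any induction over $(z^k,z^{k+1}]$ that uses only this inequality gives at best $E_z(v)\ll_f1+\log v/\log z$. That is just the trivial bound: the cancellation that makes $E_z$ bounded is lost.

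\textbf{The missing idea is to integrate once rather than bootstrap $E_z$.} Put $I_z(v)=\int_1^vM_z(t)\,dt/t$. For $v\ge z$ the identity becomes $M_z(v)\log v=I_z(v)+I_z(v/z)+O_f(M_z(v))$. Monotonicity of $M_z$ and your a priori bound give $0\le I_z(v)-I_z(v/z)\le M_z(v)\log z\ll_f\log v$. Hence $vI_z'(v)\log v=2I_z(v)+O_f(\log v)$, that is,
\[
\frac{d}{dv}\,\frac{I_z(v)}{(\log v)^2}\ll_f\frac1{v(\log v)^2}\qquad(v\ge z),
\]
uniformly in $z$. Integrating to $\infty$ gives $I_z(v)=c_z(\log v)^2+O_f(\log v)$, and then $M_z(v)=2c_z\log v+O_f(1)$.

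You must still prove $2c_z=V_f(z)$. One way is to use $\sigma\sum_rh_z(r)r^{-\sigma}\to2c_z$ as $\sigma\to0^+$, together with continuity at $\sigma=0$ of $\prod_p(1+h_z(p)p^{-\sigma})(1-p^{-1-\sigma})$. Step~1 only says what the residue should be. Note also that the relevant series is $\sum_rh_z(r)r^{-s}$; the series $\sum_rh_z(r)r^{-1-s}$ you wrote converges at $s=0$ and has no pole there.

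\textbf{How the paper does it.} The paper sidesteps all of this by citing the dimension-one case of Ford's Theorem~4.4. Its error term $O(\mathfrak S(q)(L+1))$ is explicit in the lower deviation $L$. The empty range $p\le z$ forces $L=\log z+C_f$, and $\mathfrak S(q_z)=V_f(z)\asymp1/\log z$ makes the error $O_f(1)$ at once. That is exactly the uniformity in $z$ your bootstrap was meant to supply.
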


\begin{proof}
We use the dimension-one case of \cite[Theorem~4.4]{Ford},
in the following form.
Let $q$ be a multiplicative function supported on squarefree
integers, and suppose that there are constants $A_1,A_2,L>0$
such that
\[
 0\le q(p)\le 1-A_1
 \qquad\text{for every prime }p,
\]
and
\[
 -L\le
 \sum_{w\le p\le y}q(p)\log p-\log(y/w)
 \le A_2
 \qquad(2\le w\le y).
\]
Let $h$ be the multiplicative function supported on squarefree
integers with
\[
 h(p)=\frac{q(p)}{1-q(p)}.
\]
Then, uniformly for $v\ge2$,
\[
 \sum_{r\le v}h(r)
 =
 \mathfrak S(q)\log v
 +O_{A_1,A_2}\bigl(\mathfrak S(q)(L+1)\bigr),
\]
where
\[
 \mathfrak S(q)
 =
 \prod_p(1-q(p))^{-1}(1-p^{-1}).
\]
In particular, the implied constant is independent of $L$.
In Ford's general statement, the two factors inside the error bound
simplify at sieve dimension $\kappa=1$ as follows:
\[
 (L+1)(\log v)^{\kappa-1}=(L+1)(\log v)^0=L+1,
 \qquad (L+1)^\kappa=L+1.
\]
Multiplication by $\mathfrak S(q)$ therefore gives
$2\mathfrak S(q)(L+1)$, and the factor $2$ is absorbed into
the implied constant.

Take $q_z(1)=1$ and
\[
 q_z(p)=
 \begin{cases}
  0,&p\le z,\\[2mm]
  \rho(p)/p,&p>z.
 \end{cases}
\]
Extend this definition multiplicatively to squarefree integers,
and set $q_z(r)=0$ whenever $r$ is divisible by the square of
a prime. Thus, for squarefree $r$,
\[
 q_z(r)=\prod_{p\mid r}q_z(p).
\]
The associated function $h$ in Ford's theorem is precisely $h_z$,
since
\[
 \frac{q_z(p)}{1-q_z(p)}
 =
 \begin{cases}
  0,&p\le z,\\[2mm]
  \displaystyle\frac{\rho(p)}{p-\rho(p)},&p>z.
 \end{cases}
\]

The dependence of the hypotheses on $z$ is as follows.
Since $0\le\rho(p)\le m$, for $z>2m$ we have
\[
 0\le q_z(p)\le\frac12
 \qquad\text{for every prime }p.
\]
Consequently, we may take $A_1=1/2$, independently of $z$.

For the second hypothesis, write
\[
 W(t):=\sum_{p\le t}\frac{\rho(p)\log p}{p}.
\]
By \eqref{eq:weighted-mertens},
\[
 W(t)=\log t+O_f(1)
 \qquad(t\ge2).
\]
For $2\le w\le y$, put
\[
 \Delta_z(w,y)
 :=
 \sum_{w\le p\le y}q_z(p)\log p-\log(y/w).
\]
There are three cases.

If $y\le z$, all terms in the prime sum vanish. Hence
\[
 \Delta_z(w,y)=-\log(y/w),
 \qquad
 -\log z\le\Delta_z(w,y)\le0.
\]

If $w\le z<y$, the nonzero terms are exactly those with
$z<p\le y$. Therefore
\begin{align*}
 \Delta_z(w,y)
 &=W(y)-W(z)-\log(y/w)\\
 &=\log(y/z)-\log(y/w)+O_f(1)\\
 &=-\log(z/w)+O_f(1).
\end{align*}
Since $2\le w\le z$, this gives
\[
 -\log z-O_f(1)
 \le\Delta_z(w,y)\le O_f(1).
\]

Finally, suppose that $z<w\le y$. In this case the cutoff
at $z$ does not affect the sum, so
\[
 \sum_{w\le p\le y}q_z(p)\log p
 =W(y)-W(w)+E(w),
\]
where
\[
 E(w):=\sum_{\substack{p\ \mathrm{prime}\\p=w}}
             \frac{\rho(p)\log p}{p}.
\]
The sum defining $E(w)$ has at most one term, and
\[
 0\le E(w)\le m\frac{\log w}{w}=O_f(1).
\]
It follows that
\[
 \Delta_z(w,y)=O_f(1).
\]

There is therefore a positive constant $C_f$, depending only on $f$,
such that
\begin{equation}\label{eq:ford-hypothesis}
 -\log z-C_f
 \le
 \sum_{w\le p\le y}q_z(p)\log p-\log(y/w)
 \le C_f
 \qquad(2\le w\le y).
\end{equation}
By \eqref{eq:ford-hypothesis}, the parameters in Ford's theorem can be chosen as
\[
 \kappa=1,\qquad
 A_1=\frac12,\qquad
 A_2=C_f,\qquad
 L=L_z:=\log z+C_f.
\]
The parameters $\kappa,A_1,A_2$ are independent of $z$.
Only $L_z$ varies with $z$, and its contribution is explicit
in the error term. Ford's theorem therefore yields
\[
 M_z(v)
 =
 \mathfrak S(q_z)\log v
 +O_f\bigl(\mathfrak S(q_z)(L_z+1)\bigr)
 \qquad(v\ge2),
\]
uniformly in $v$ and in sufficiently large $z$.
The factor $\mu^2(r)$ appearing in the sum in Ford's statement
may be omitted here, because $h_z(r)=0$ for nonsquarefree $r$.

We now evaluate the Euler product.
For $t>z$, the definition of $q_z$ gives
\[
 \prod_{p\le t}(1-q_z(p))
 =
 \prod_{z<p\le t}\left(1-\frac{\rho(p)}p\right)
 =
 \frac{\sigma_f(t)}{\sigma_f(z)}.
\]
Consequently,
\begin{align*}
 \mathfrak S(q_z)
 &=
 \lim_{t\to\infty}
 \prod_{p\le t}(1-q_z(p))^{-1}(1-p^{-1})\\
 &=
 \sigma_f(z)
 \lim_{t\to\infty}
 \frac{\displaystyle\prod_{p\le t}(1-p^{-1})}
      {\sigma_f(t)}.
\end{align*}
The ordinary Mertens product formula and
\eqref{eq:polynomial-mertens} give, respectively,
\[
 \prod_{p\le t}(1-p^{-1})
 =\frac{e^{-\gamma_E}}{\log t}(1+o(1)),
 \qquad
 \sigma_f(t)=\frac{\mathfrak c_f}{\log t}(1+o(1)).
\]
Hence
\[
 \mathfrak S(q_z)
 =\frac{e^{-\gamma_E}}{\mathfrak c_f}\sigma_f(z)
 =V_f(z).
\]
Applying \eqref{eq:polynomial-mertens} once more, now at $z$,
we obtain
\[
 V_f(z)
 =
 \frac{e^{-\gamma_E}}{\log z}
 \left(1+O_f\left(\frac1{\log z}\right)\right),
\]
as asserted in \eqref{eq:V}. In particular,
\[
 \mathfrak S(q_z)(L_z+1)
 =
 V_f(z)(\log z+C_f+1)
 \ll_f 1.
\]
Substitution into Ford's estimate proves
\[
 M_z(v)=V_f(z)\log v+O_f(1),
\]
with an implied constant independent of both $z$ and $v$.

No restriction $v>z$ is needed. Indeed, if $2\le v\le z$,
then every integer $2\le r\le v$ has a prime divisor at most
$z$, and hence $h_z(r)=0$. Thus $M_z(v)=h_z(1)=1$, while
\[
 V_f(z)\log v\ll_f\frac{\log v}{\log z}\ll_f1,
\]
so the estimate holds in this range as well.
\end{proof}

\section{Covering a small set of positions}

\begin{lemma}[A short translate covering the remaining positions]
\label{lem:translate}
There is a constant $\eta_f>0$ such that the following holds for all
sufficiently large $x$. Let
\[
 1\le H\le x(\log x)^2,\qquad
 S\subseteq[1,H]\cap\cZ,\qquad 1\le |S|\le\eta_f x,
 \qquad Q=Q(x).
\]
For every integer $b$, there is an integer $1\le t\le e^x$ such that
each number $f(b+Qt+s)$, $s\in S$, is divisible by a prime $p$ with
\begin{equation}\label{eq:auxiliary-range}
 x^{\alpha_f}<p\le y,\qquad
 y=\exp\left(\frac{x}{(\log x)^5}\right),\qquad
 \alpha_f=m^2+6.
\end{equation}
The threshold for $x$ is independent of $H$, $S$, and $b$.
\end{lemma}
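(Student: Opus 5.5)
Our approach is a second-moment (Chebyshev-type) argument over translates $t$, using a truncated product of divisor sums in the spirit of \cite{LG}. Let $\mathcal P$ be the set of primes in $(x^{\alpha_f},y]$ and $P=\prod_{p\in\mathcal P}p$. Since $Q=Q(x)$ is coprime to $P$, the map $t\mapsto b+Qt$ permutes residues modulo each $p\in\mathcal P$. Hence, for each $s\in S$, the event ``$p\mid g(b+Qt+s)$'' picks out exactly $\rho(p)$ classes of $t$ modulo $p$, and by \eqref{eq:divisibility-equivalence} this is the same as $p\mid f(b+Qt+s)$. By Lemma~\ref{lem:separation}, for fixed $p\in\mathcal P$ these class sets are pairwise disjoint as $s$ runs over $S$. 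For each $s$ let $U_s(t)=\ind\{p\nmid g(b+Qt+s)\ \forall p\in\mathcal P\}$. It suffices to find $t\le e^x$ with $\sum_{s\in S}U_s(t)=0$.

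\textbf{Step 1 (weights).} We majorize $U_s$ by a Selberg-type square
\[
U_s(t)\le\Bigl(\sum_{d\mid P,\ d\le R,\ d\mid g(b+Qt+s)}\lambda_d\Bigr)^2,\qquad \lambda_1=1,
\]
with $\lambda_d=\mu(d)\,\frac{\rho(d)}{\ldots}\cdots$ the optimal Selberg coefficients for sieve dimension one, where $\rho(d)=\prod_{p\mid d}\rho(p)$. We choose the level $R=\exp(x/(\log x)^3)$, or rather impose the product/simplex-type truncation $\prod d\le e^{x/2}$, so that the moduli stay below $e^x$. Lemma~\ref{lem:summation} with $z=x^{\alpha_f}$ computes the normaliser $M_z(R)$. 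This gives a complete average
\[
\E U_s\le \E(\cdot)^2\approx \frac1{M_z(R)}\approx\frac{\log z}{e^{-\gamma_E}\log R}\ll_f \frac{\log x}{x/(\log x)^3},
\]
which is tiny. Taking the sum over the $\le\eta_f x$ values of $s$, the expected number of uncovered positions is at most $O(\eta_f(\log x)^4)$. This is too large, so the plain Selberg bound by itself is insufficient. We therefore instead adapt the LG product weight: take $W(t)=\prod_{s\in S}(\text{weight}_s(t))$ with a truncated product of divisor sums, and compare $\E W\sum_s U_s$ with $\E W$. The disjointness of root classes from Lemma~\ref{lem:separation} gives the local moment identities, so each $\E_p$ of a product factorizes prime by prime. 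The ratio $\E[W U_s]/\E W$ is then governed by $\prod_{p\in\mathcal P}(1-\rho(p)/p)$-type factors. By Lemma~\ref{lem:mertens} with $\log y/\log z\asymp x/(\log x)^6$, these factors are $\ll (\log x)^6/x$. Summing over $|S|\le\eta_f x$ positions gives an expected count $<1$ once $\eta_f$ is small and the losses from truncation are controlled.

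\textbf{Step 2 (finite averages).} We replace the complete average over $t$ modulo $P$ by $T^{-1}\sum_{t\le T}$ with $T=\lfloor e^x\rfloor$. Expanding $W\cdot U_s$ into a sum over compatible divisibility conditions, each term is periodic modulo a squarefree $q$ built from the divisors appearing. Because of the truncation $q\le e^{x/2}$, it consists of $\rho(q)$ classes modulo $q$, and its finite average equals $\rho(q)/q+O(\rho(q)/T)$. The total error is therefore $T^{-1}\sum|\text{coeff}|\rho(q)$. We bound this by $e^{-x/2}\prod(1+\text{stuff})$, using $\rho(p)\le m$ and Lemma~\ref{lem:summation} to control sums of $|\lambda_d|\rho(d)$. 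The requirement $|S|\le\eta_f x$ keeps the product of at most $\eta_f x$ factors of size $e^{O(1)}$ below $e^{x/4}$.

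\textbf{Conclusion and main obstacle.} Having $T^{-1}\sum_{t\le T}W(t)\sum_sU_s(t)<T^{-1}\sum_{t\le T}W(t)$ with $W\ge0$ forces some $t\le e^x$ with $\sum_s U_s(t)=0$, which proves the lemma. The main obstacle is Step 2 combined with the choice of truncation. The truncated weight must be strong enough that the moment ratio per position is $O(1/x)$ with a small constant, yet supported on moduli small enough that the class-count factor $\rho(q)$ and the coefficient $\ell^1$ norm do not overwhelm $T=e^x$. All of this must hold uniformly in $H$, $S$ and $b$, which we get because only Lemmas~\ref{lem:separation} and \ref{lem:summation} enter, and both are uniform.
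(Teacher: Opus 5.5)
Your outline matches the paper's architecture: a product weight over positions, disjoint root classes from Lemma~\ref{lem:separation}, comparison of the weighted number of uncovered positions with the total weight, and a $\nu(q)$-class count when passing to $t\le T$. But the step that makes it work is missing, and your numbers do not close.

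\textbf{The count does not close.} Over a full period, the density of $t$ leaving a fixed position uncovered is exactly $\prod_{p\mid P}(1-\rho(p)/p)=\sigma_f(y)/\sigma_f(z)\asymp(\log x)^6/x$. Suppose $\E[WU_s]/\E W$ is merely ``governed by $\prod_p(1-\rho(p)/p)$-type factors''. Then about $k(\log x)^6/x\asymp\eta_f(\log x)^6$ positions remain uncovered when $k\asymp\eta_f x$, and no fixed $\eta_f$ makes this $<1$. Your Selberg bound $(\log x)^4/x$ lies below this exact density, so it cannot hold. Only divisors of $P$ are available, so $\sum_{d\mid P,\,d\le R}h(d)\le\prod_{p\mid P}(1+h(p))\asymp\log y/\log z$, and Lemma~\ref{lem:summation} does not apply at a level $R>y$.

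\textbf{The missing idea: exact cancellation.} The weight must essentially annihilate every $t$ with an uncovered position. The paper takes $\psi_{i,p}$ equal to $h(p)$ off the $\rho(p)$ root classes of $F_i$ and $-1$ on them. It takes $a(1)=1$, $a(r)=-1/(B\log r)$, normalized so that $\sum_{r\mid P}a(r)h(r)=0$. Then $\sum_{r\mid P}a(r)\psi_{i,r}(t)=0$ whenever $(F_i(t),P)=1$, so the untruncated product over positions vanishes at such $t$. After truncating to pairwise coprime tuples with $r_1\cdots r_k\le D=e^{x/4}$, an uncovered position leaves only residual coefficients $\epsilon(v)\ll(v/D)^\beta+\log D/z$, where $\beta=1/\log y$. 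This gives $S_2/S_1\ll k(e^{-u}+(\log D)^2/z^2)+k\mathcal E_x\to0$, with $u=\beta\log D$.

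This is also where $k\le\eta_f x$ really enters. It is needed in Rankin's bound $D^{-\beta}A_\beta^k\le A^ke^{-u+Ck\beta}$, where $A_\theta=\sum_{r\mid P}a(r)^2h(r)r^\theta$, and that bound requires $Ck\beta\le u/4$. It is not used to multiply a per-position density by $k$.

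\textbf{A second gap in Step~2.} The indicator $U_s$ has period $P$, which is vastly larger than $T$. Expanding $WU_s$ into divisibility conditions therefore produces moduli far beyond any truncation, and its average over $t\le T$ cannot be taken term by term. You need a majorant of $WU_s$ that is itself a truncated divisor expansion. In the paper this again comes from the cancellation. Setting all position-$i$ indicators to zero in $R$ gives $R_i$ with $R(t)^2\ind_{(F_i(t),P)=1}\le R_i(t)^2$ pointwise, with equality when position $i$ is uncovered. $R_i$ keeps only the terms of \eqref{eq:lambda-expansion} with $d_i=1$, so \eqref{SUM_R_SUBSET} applies to it.

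\textbf{Size of the coefficients and the cutoff.} The $\ell^1$ norm of the coefficients is controlled by the truncation itself: $\sum_{\bm d}|\lambda_{\bm d}|\le2\sum_{n\mid P,\,n\le D}k^{\omega(n)}\le2D^{1+\log k/\log z}$, using only $k\le x$. It is not a product of $k$ factors of size $e^{O(1)}$. The resulting error is roughly $D^{2+2/\alpha_f+o(1)}/T$, so the cutoff must satisfy $D^{2+2/\alpha_f+o(1)}<T$; the paper takes $D=e^{x/4}$. With your first choice $\prod d\le e^{x/2}$ (moduli up to $e^x$), this bound exceeds $1$.
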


\begin{remark}
There is no restriction $0\le b<Q$. This uniformity will allow us to
place the final interval in $(X/2,X]$.
The lemma asserts divisibility only. Positivity of the polynomial values
and the fact that the divisors are proper will be checked when proving
Theorem~\ref{thm:main}.
\end{remark}

The proof occupies the rest of this section. We construct coefficients
with an exact cancellation identity and use them to define a
nonnegative weight on $1\le t\le e^x$. We then show that the total
weight is positive and exceeds the weighted sum of uncovered positions.
Since the number of uncovered positions at each argument is an integer,
some argument must cover every position.

For the rest of this section, set
\begin{equation}\label{eq:parameters}
 \begin{gathered}
 T=\lfloor e^x\rfloor,\qquad
 z=x^{\alpha_f},\qquad D=e^{x/4},\\
 \beta=\frac1{\log y},\qquad
 u=\beta \log D =\frac14(\log x)^5.
 \end{gathered}
\end{equation}
For the parameters in \eqref{eq:parameters} and sufficiently large $x$,
we have $x<z$, $H<z$, and $z<y$.
Indeed, $\alpha_f=m^2+6\ge7$, and
\[
 \frac{H}{z}\le\frac{(\log x)^2}{x^{\alpha_f-1}}\longrightarrow0,
 \qquad
 \frac{\log y}{\log z}
 =\frac{x}{\alpha_f(\log x)^6}\longrightarrow\infty.
\]
In particular, every prime greater than $z$ is greater than $p_0$
once $x$ is sufficiently large.

Define
\begin{equation}\label{DEF_P}
 P=\prod_{\substack{z<p\le y\\\rho(p)>0}}p,
 \qquad h(r)=h_z(r)\quad(r\mid P),\qquad
 B=\sum_{\substack{r\mid P\\r>1}}\frac{h(r)}{\log r}.
\end{equation}
Thus $P$ is squarefree, $h(1)=1$, and
\[
 h(r)=\prod_{p\mid r}\frac{\rho(p)}{p-\rho(p)}>0
 \qquad(r\mid P).
\]
By \eqref{eq:weighted-mertens},
\[
 \sum_{z<p\le y}\frac{\rho(p)\log p}{p}
 =\log y-\log z+O_f(1)>0
\]
for large $x$. Thus some prime in this range satisfies $\rho(p)>0$,
so $P>1$. Its contribution to $B$ gives $B>0$.
For $r\mid P$, put
\begin{equation}\label{eq:coefficients}
 \begin{gathered}
 a(1)=1,\qquad a(r)=-\frac1{B\log r}\quad(r>1),\\
 A_\theta=\sum_{r\mid P}a(r)^2h(r)r^\theta,
 \qquad A=A_0.
 \end{gathered}
\end{equation}
The sums are finite, although the product $P$ can be very large.

The choice of $a(r)$ can also be obtained by minimization.
For real coefficients $c(r)$, $r\mid P$, $r>1$, subject to
$\sum_{r>1}c(r)h(r)=-1$, Cauchy's inequality gives
\[
 \begin{aligned}
 1
 &=\left(\sum_{\substack{r\mid P\\r>1}}
      c(r)\sqrt{h(r)\log r}\sqrt{\frac{h(r)}{\log r}}\right)^2\\
 &\le
 \left(\sum_{\substack{r\mid P\\r>1}}c(r)^2h(r)\log r\right)B.
 \end{aligned}
\]
Equality holds when $c(r)\log r$ is constant. The cancellation
constraint then forces this constant to be $-1/B$, giving exactly
\eqref{eq:coefficients}. These coefficients therefore minimize the
logarithmically weighted square sum subject to that constraint.

\subsection{Estimates for the coefficients}

\begin{lemma}\label{lem:coefficients}
As $x\to\infty$,
\begin{equation}\label{eq:B-limit}
 B=V_f(z)\log\frac{\log y}{\log z}
       +O_f((\log z)^{-1})
  =\frac{e^{-\gamma_E}}{\alpha_f}+o_f(1).
\end{equation}
For all sufficiently large $x$,
\begin{equation}\label{eq:coefficients-basic}
 1\le A=1+O_f((\log x)^{-2}),\qquad
 |a(r)|\le1,\qquad \sum_{r\mid P}a(r)h(r)=0.
\end{equation}
There is a fixed constant $C\ge1$, depending only on $f$, such that
for $0\le\theta\le2\beta$,
\begin{equation}\label{eq:coefficients-moments}
 \sum_{\substack{r\mid P\\r>1}}|a(r)|h(r)r^\theta\le C,
 \qquad A_\theta\le A+C\theta.
\end{equation}
Finally, uniformly for $p\mid P$,
\begin{equation}\label{eq:coefficients-prime}
 \sum_{\substack{r\mid P\\p\mid r}}|a(r)|h(r)\ll_f\frac1p.
\end{equation}
\end{lemma}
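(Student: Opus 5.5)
The common tool is Lemma~\ref{lem:summation}, used through partial summation over $r\mid P$. All the sums range over $r\mid P$, that is, over squarefree $r$ with all prime factors in $(z,y]$ and $h(r)=h_z(r)$. I will truncate at $y$ by noting that every such $r$ with $r\le y$ already divides $P$; note $h_z(p)>0$ forces $\rho(p)>0$. For larger $r$, I will bound the error by Rankin's trick.

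\textbf{Estimate for $B$.} Write $B=\sum_{r\mid P,\,r>1} h(r)/\log r$ and split at $r\le y$ and $r>y$. For $r\le y$ the sum equals $\int_{z}^{y}(\log v)^{-1}\,dM_z(v)$, taken over $r>1$. By Lemma~\ref{lem:summation}, $M_z(v)=V_f(z)\log v+O_f(1)$, and $M_z(v)=1$ for $v\le z$. Partial summation then gives
\[
 V_f(z)\log\frac{\log y}{\log z}+O_f\!\left(\frac1{\log z}\right).
\]
The boundary and error terms are $O_f(1)/\log v$ evaluated at $v\ge z$, together with $\int O_f(1)\,dv/(v\log^2 v)\ll 1/\log z$.

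For $r>y$, use $1/\log r<1/\log y$ and Rankin's trick:
\[
 \sum_{r\mid P,\,r>y}h(r)\le y^{-\epsilon}\prod_{z<p\le y}\bigl(1+h(p)p^{\epsilon}\bigr).
\]
With $\epsilon=1/\log y$, the product is $\exp\bigl(\sum_{z<p\le y}\rho(p)e/p+O(1)\bigr)\ll \log y/\log z$ by Lemma~\ref{lem:mertens}. So this tail is $\ll 1/\log z$. Since $\log(\log y/\log z)=\log x+O(\log\log x)$ and $V_f(z)\sim e^{-\gamma_E}/(\alpha_f\log x)$, the limit $e^{-\gamma_E}/\alpha_f$ follows.

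\textbf{Basic properties.} The cancellation identity $\sum_{r\mid P} a(r)h(r)=1-B^{-1}\sum_{r>1}h(r)/\log r=0$ is immediate. The bound $|a(r)|\le1$ holds because $\log r>\log z\to\infty$ while $B$ is bounded below by a constant. For $A$,
\[
 A=1+B^{-2}\sum_{r>1}\frac{h(r)}{(\log r)^2}\ge 1,
\]
and the same splitting shows the last sum is $\ll 1/\log z+(\log y)^{-2}\log y/\log z\ll(\log x)^{-2}$. Partial summation gives $\int_z V_f/(v(\log v)^2)\,dv\ll 1/(\log z)^2$.

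\textbf{Moments and the prime sum.} For the moments, with $0\le\theta\le2\beta$, I would show
\[
 \sum_{r>1}\frac{h(r)r^{\theta}}{\log r}\ll1
\]
by splitting $r$ into dyadic-in-$\log$ ranges $y^{j}<r\le y^{j+1}$. On range $j$ use $r^\theta\le e^{2(j+1)}$, and bound the $h$-mass by Rankin with exponent $\epsilon=3/\log y$ (or larger) to get decay like $e^{-3j}\cdot O(\log y/\log z)\cdot(1/(j\log y))$. The range $j=0$ is handled as before. Then
\[
 A_\theta-A=\sum_{r>1}a(r)^2h(r)\bigl(r^\theta-1\bigr)\le\sum a(r)^2h(r)\,\theta\log r\, r^{\theta},
\]
and $a(r)^2\log r=1/(B^2\log r)$, so the same bound yields $\le C\theta$.

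For \eqref{eq:coefficients-prime}, write $r=pr'$ and use $|a(r)|\le 1/(B\log p)$ when $r'=1$. In general,
\[
 \sum_{p\mid r}|a(r)|h(r)\le h(p)B^{-1}\sum_{r'\mid P}\frac{h(r')}{\log(pr')}\le \frac{h(p)}{B}\left(\frac1{\log p}+\sum_{r'>1}\frac{h(r')}{\log r'}\right)\ll h(p)\ll_f \frac1p.
\]
This uses $h(p)\le 2m/p$ and $B\asymp1$.

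\textbf{Main obstacle.} The delicate step is the moment bound uniformly up to $\theta=2\beta$. There $r^\theta$ is not bounded once $r>y$, so the Rankin exponent must beat $\theta$. I expect choosing $\epsilon\approx 3/\log y$ to give geometric decay, since $\prod(1+h(p)p^\epsilon)$ stays $\ll(\log y/\log z)^{O(1)}$. The loss of a factor $\log y/\log z\approx x$ would then have to be compensated by the $1/\log r$ weight. If this does not close, I would instead use the bound $\sum_{r\le v}h(r)\ll \log v/\log z$ from Lemma~\ref{lem:summation} on each range. Combined with Rankin for the mass beyond $y^{J}$, that gives $\sum_j e^{2(j+1)}\,\ldots$, which needs more care; I would check this step first.
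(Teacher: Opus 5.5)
Your treatment of the main term of $B$, the cancellation identity, $|a(r)|\le1$, $A$, and \eqref{eq:coefficients-prime} matches the paper. However, the moment bound \eqref{eq:coefficients-moments} is not established, and the missing ingredient is the one you flag. Everything depends on the perturbed Euler product losing only a \emph{bounded} factor. For $0\le\epsilon\log y\le c$ you need
\[
 U_\epsilon:=\sum_{r\mid P}h(r)r^{\epsilon}
 =\prod_{p\mid P}\bigl(1+h(p)p^{\epsilon}\bigr)
 \ll_{f,c}U_0=\frac{\sigma_f(z)}{\sigma_f(y)}\asymp_f\frac{\log y}{\log z}.
\]
Your computations do not give this. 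Bounding $p^\epsilon\le e$ termwise, as in your tail estimate for $B$, gives $\exp\bigl(e\sum_{z<p\le y}\rho(p)/p+O(1)\bigr)\asymp_f(\log y/\log z)^{e}$ by Lemma~\ref{lem:mertens}, not $\ll\log y/\log z$. In the moment step you only claim $(\log y/\log z)^{O(1)}$. Since $\log y/\log z\asymp_f x/(\log x)^6$, any exponent above $1$ leaves a positive power of $x$ that the weight $1/\log r\le1/(j\log y)$ cannot absorb. So neither your dyadic scheme nor the fallback via Lemma~\ref{lem:summation} closes; the fallback still needs Rankin, and hence the same product, beyond $y^J$.

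For $B$ and $A$ the slip is harmless, because Rankin is not needed there: the tails over $r>y$ are at most $U_0/\log y$ and $U_0/(\log y)^2$. Also, the term $1/\log z$ in your bound for $\sum_{r>1}h(r)/(\log r)^2$ should read $1/(\log z)^2$.

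The fix is to compare with $U_0$ prime by prime instead of bounding $p^\epsilon$ crudely. We have $(1+h(p)p^\epsilon)/(1+h(p))=1+\frac{\rho(p)}{p}(p^\epsilon-1)$ and $0\le p^\epsilon-1\le e^{c}\epsilon\log p$ for $p\le y$. Then \eqref{eq:weighted-mertens} gives $U_\epsilon/U_0\le\exp\bigl(e^{c}\epsilon(\log y+O_f(1))\bigr)\ll_{f,c}1$. The perturbation is governed by the weighted sum $\sum\rho(p)\log p/p\approx\log y$, which $\epsilon\asymp1/\log y$ neutralizes, not by $\sum\rho(p)/p$.

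With this estimate your dyadic argument works, since range $j$ contributes $\ll e^{2-j}/(j\log z)$. It is, however, no longer needed. As in the paper, split only at $y$. For $r\le y$, use $r^\theta\le e^2$ and $\sum_{r>1}|a(r)|h(r)=1$. For $r>y$, use $|a(r)|\le1/(B\log y)$ together with $\sum_{r\mid P}h(r)r^\theta=U_\theta\ll U_0$. The bound $A_\theta\le A+C\theta$ then follows exactly as you indicate.
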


\begin{proof}
\smallskip
\noindent\emph{Step 1: a bound for the Euler products.}
Let
\[
 U_\theta=\sum_{r\mid P}h(r)r^\theta.
\]
Divisors of the squarefree integer $P$ correspond to subsets of
its prime factors. Multiplicativity therefore gives
\[
 U_\theta=\prod_{p\mid P}(1+h(p)p^\theta).
\]
Since
\[
 1+h(p)=1+\frac{\rho(p)}{p-\rho(p)}
       =\left(1-\frac{\rho(p)}p\right)^{-1},
\]
and a prime with $\rho(p)=0$ contributes a factor $1$, we obtain
\begin{equation}\label{eq:U-zero}
 U_0=\prod_{z<p\le y}\left(1-\frac{\rho(p)}p\right)^{-1}
     =\frac{\sigma_f(z)}{\sigma_f(y)}
     \asymp_f\frac{\log y}{\log z}.
\end{equation}
The last estimate follows from \eqref{eq:polynomial-mertens}.
For the perturbed product, observe that
\[
 \frac{1+h(p)p^\theta}{1+h(p)}
 =1+\frac{h(p)}{1+h(p)}(p^\theta-1)
 =1+\frac{\rho(p)}p(p^\theta-1).
\]
If $0\le\theta\le2\beta$ and $p\le y$, then
$0\le\theta\log p\le2$. Using
$e^v-1=\int_0^v e^s\,ds$, we have
\[
 0\le p^\theta-1\le e^2\theta\log p.
\]
The inequality $1+t\le e^t$ for $t\ge0$ therefore yields
\begin{equation}\label{eq:U-theta}
 \begin{aligned}
 \frac{U_\theta}{U_0}
 &\le\exp\left(\sum_{p\mid P}
                    \frac{\rho(p)}p(p^\theta-1)\right)\\
 &\le\exp\left(e^2\theta
                    \sum_{z<p\le y}\frac{\rho(p)\log p}{p}\right)
 \ll_f1.
 \end{aligned}
\end{equation}
Indeed, the prime sum is at most $\log y+O_f(1)$ by
\eqref{eq:weighted-mertens}, and $\theta\log y\le2$.
The bound is thus uniform throughout the stated range of $\theta$.

\smallskip
\noindent\emph{Step 2: evaluation of $B$.}
We first remove the divisors exceeding $y$. For such divisors,
$1/\log r\le1/\log y$, so nonnegativity and \eqref{eq:U-zero} give
\begin{equation}\label{eq:B-tail}
 0\le\sum_{\substack{r\mid P\\r>y}}\frac{h(r)}{\log r}
 \le\frac1{\log y}\sum_{r\mid P}h(r)
 =\frac{U_0}{\log y}\ll_f\frac1{\log z}.
\end{equation}
Put
\[
 K_z(v)=M_z(v)-1=\sum_{1<r\le v}h_z(r).
\]
The subtraction removes the term $r=1$. Every $r>1$ with
$h_z(r)\ne0$ is squarefree and has all its prime factors greater
than $z$. Hence $K_z(v)=0$ for $v\le z$. Moreover,
\eqref{eq:summation} gives, uniformly for $v\ge2$,
\[
 K_z(v)=V_f(z)\log v+O_f(1).
\]
If also $r\le y$, each prime factor of $r$ lies in $(z,y]$
and has $\rho(p)>0$. Thus $r\mid P$ and $h(r)=h_z(r)$.
It follows that partial summation with the function $1/\log v$
gives
\[
 \sum_{\substack{r\mid P\\1<r\le y}}\frac{h(r)}{\log r}
 =\sum_{1<r\le y}\frac{h_{z}(r)}{\log r}=\frac{K_z(y)}{\log y}
   +\int_z^y\frac{K_z(v)}{v(\log v)^2}\,dv.
\]
There is no lower boundary term, since $K_z(z)=0$.
Substituting the estimate for $K_z$ and evaluating the integrals,
we obtain
\[
 \begin{aligned}
 \sum_{\substack{r\mid P\\1<r\le y}}\frac{h(r)}{\log r}
 &=V_f(z)+V_f(z)\int_z^y\frac{dv}{v\log v}
   +O_f\left(\frac1{\log y}
          +\int_z^y\frac{dv}{v(\log v)^2}\right)\\
 &=V_f(z)+V_f(z)\log\frac{\log y}{\log z}
   +O_f\left(\frac1{\log z}\right).
 \end{aligned}
\]
Here
\[
 \int_z^y\frac{dv}{v(\log v)^2}
 =\frac1{\log z}-\frac1{\log y}.
\]
Since $V_f(z)\ll_f1/\log z$, the first term is absorbed into
the error. Adding \eqref{eq:B-tail} proves the first equality
in \eqref{eq:B-limit}.

To compute the limit, note that
\[
 \log z=\alpha_f\log x,\qquad
 \log\frac{\log y}{\log z}
 =\log\frac{x}{\alpha_f(\log x)^6}
 =\log x-6\log\log x-\log\alpha_f.
\]
Formula \eqref{eq:V} now gives
\[
 \begin{aligned}
 B
 &=\frac{e^{-\gamma_E}}{\alpha_f\log x}
   \left(1+O_f\left(\frac1{\log x}\right)\right)
   \bigl(\log x-6\log\log x-\log\alpha_f\bigr)
   +O_f\left(\frac1{\log x}\right)\\
 &=\frac{e^{-\gamma_E}}{\alpha_f}
   +O_f\left(\frac{1+\log\log x}{\log x}\right).
 \end{aligned}
\]
In particular, there are constants $b_-,b_+>0$, depending only
on $f$, such that $b_-\le B\le b_+$ for all sufficiently large $x$.

\smallskip
\noindent\emph{Step 3: the unperturbed square sum and cancellation.}
Separating the term $r=1$ in $A$ gives
\[
 A-1=\frac1{B^2}\sum_{\substack{r\mid P\\r>1}}
                         \frac{h(r)}{(\log r)^2}\ge0.
\]
Enlarge the sum to all $r>1$, with weight $h_z(r)$.
For $R>z$, partial summation gives
\[
 \sum_{1<r\le R}\frac{h_z(r)}{(\log r)^2}
 =\frac{K_z(R)}{(\log R)^2}
   +2\int_z^R\frac{K_z(v)}{v(\log v)^3}\,dv.
\]
The boundary term tends to zero as $R\to\infty$, because
$K_z(R)\ll_f1+V_f(z)\log R$. The same bound shows that the
integral converges. Thus
\[
 \begin{aligned}
 0\le A-1
 &\le\frac2{B^2}\int_z^\infty
                     \frac{K_z(v)}{v(\log v)^3}\,dv\\
 &\ll_f V_f(z)\int_z^\infty\frac{dv}{v(\log v)^2}
       +\int_z^\infty\frac{dv}{v(\log v)^3}\\
 &=\frac{O_f(V_f(z))}{\log z}
       +O_f\left(\frac1{(\log z)^2}\right)
 \ll_f\frac1{(\log x)^2}.
 \end{aligned}
\]
We used $B\ge b_-$, $V_f(z)\ll_f1/\log z$, and
$\log z=\alpha_f\log x$.

For a divisor $r>1$ of $P$, at least one prime greater than $z$
divides $r$, so $r>z$. Therefore
\[
 |a(r)|=\frac1{B\log r}
 \le\frac1{b_-\log z}\le1
\]
for large $x$, while $|a(1)|=1$. Also $a(r)<0$ whenever
$r\mid P$ and $r>1$. Finally, the normalization
by $B$ gives the exact identities
\begin{equation}\label{eq:exact-cancellation}
 \sum_{\substack{r\mid P\\r>1}}|a(r)|h(r)
 =\frac1B\sum_{\substack{r\mid P\\r>1}}\frac{h(r)}{\log r}=1,
 \qquad
 \sum_{r\mid P}a(r)h(r)=1-1=0.
\end{equation}
This proves \eqref{eq:coefficients-basic}.

\smallskip
\noindent\emph{Step 4: the perturbed moments.}
Fix $0\le\theta\le2\beta$ and split the sum involving $|a(r)|$
at $y$. If $r\le y$, then $r^\theta\le e^2$, so
\[
 \sum_{\substack{r\mid P\\1<r\le y}}|a(r)|h(r)r^\theta
 \le e^2\sum_{\substack{r\mid P\\r>1}}|a(r)|h(r)=e^2.
\]
If $r>y$, use $|a(r)|\le1/(B\log y)$ instead. Then
\[
 \begin{aligned}
 \sum_{\substack{r\mid P\\r>y}}|a(r)|h(r)r^\theta
 &\le\frac1{B\log y}\sum_{r\mid P}h(r)r^\theta
 =\frac{U_\theta}{B\log y}
 \ll_f\frac1{\log z},
 \end{aligned}
\]
by \eqref{eq:U-zero} and \eqref{eq:U-theta}. Adding the two bounds
proves that the full sum is $O_f(1)$, uniformly in $\theta$.

For $r\ge1$, integration with respect to the exponent gives
\[
 0\le r^\theta-1=\int_0^\theta r^s\log r\,ds
 \le\theta r^\theta\log r.
\]
Also, $a(r)^2\log r=|a(r)|/B$ for $r>1$. Consequently,
\[
 \begin{aligned}
 0\le A_\theta-A
 &=\sum_{\substack{r\mid P\\r>1}}a(r)^2h(r)(r^\theta-1)\\
 &\le\theta\sum_{\substack{r\mid P\\r>1}}
                       a(r)^2h(r)r^\theta\log r\\
 &=\frac\theta B\sum_{\substack{r\mid P\\r>1}}
                       |a(r)|h(r)r^\theta
 \ll_f\theta.
 \end{aligned}
\]
Choose $C\ge1$, depending only on $f$, so that both estimates
in \eqref{eq:coefficients-moments} hold.

\smallskip
\noindent\emph{Step 5: coefficients containing a specified prime.}
Fix $p\mid P$. Since $P$ is squarefree, every divisor $r\mid P$
with $p\mid r$ can be written uniquely as $r=pe$, where
$e\mid P/p$ and $(p,e)=1$. Multiplicativity gives $h(pe)=h(p)h(e)$.
Separating $e=1$, we obtain
\[
 \begin{aligned}
 \sum_{\substack{r\mid P\\p\mid r}}|a(r)|h(r)
 &=\frac{h(p)}B\sum_{e\mid P/p}\frac{h(e)}{\log(pe)}\\
 &=\frac{h(p)}B\left(\frac1{\log p}
       +\sum_{\substack{e\mid P/p\\e>1}}
                              \frac{h(e)}{\log(pe)}\right)\\
 &\le\frac{h(p)}B\left(\frac1{\log p}
       +\sum_{\substack{e\mid P\\e>1}}\frac{h(e)}{\log e}\right)
 =\frac{h(p)}B\left(\frac1{\log p}+B\right).
 \end{aligned}
\]
The inequality uses $\log(pe)\ge\log e$ for $e>1$ and enlarges
the set of divisors. Since $p>z>2m$ and $0<\rho(p)\le m$,
\[
 0<h(p)=\frac{\rho(p)}{p-\rho(p)}\le\frac{2m}{p}.
\]
Together with $B\ge b_-$ and $\log p\ge\log z$, this proves
\eqref{eq:coefficients-prime} uniformly for all $p\mid P$.
\end{proof}

Fix $C$ as in Lemma~\ref{lem:coefficients}, and now choose
\begin{equation}\label{eq:eta-choice}
 \eta_f=\min\left\{\frac14,\frac1{16C}\right\}.
\end{equation}
The coefficient estimates do not involve $H$, $S$, or $b$, so
the choice depends only on $f$. Write $k=|S|$ and enumerate
$S=\{s_1,\ldots,s_k\}$, with distinct elements. Since
$1\le k\le\eta_f x\le x$, multiplication by $C\beta$ and the bound
$\eta_f\le1/(16C)$ in \eqref{eq:eta-choice} give
\begin{equation}\label{eq:budget-margin}
 Ck\beta\le C\eta_f x\beta
 \le\frac{x}{16\log y}=\frac{1}{16} (\log x)^{5}=\frac u4.
\end{equation}
This leaves room for the increase in the moments when the exponent
changes from $0$ to $\beta$ or $2\beta$.
\subsection{Local factors and the truncated weight}

Put 
\begin{equation}\label{DEF_F_i}
F_i(t)=g(b+Qt+s_i),\qquad i=1,\ldots, k.
\end{equation} For $p\mid P$, define
\begin{equation}\label{psi.def}
 \psi_{i,p}(t)=\frac{\rho(p)-p\ind_{p\mid F_i(t)}}{p-\rho(p)},
 \qquad
 \psi_{i,r}(t)=\prod_{p\mid r}\psi_{i,p}(t),\qquad
 \psi_{i,1}(t)=1.
\end{equation}
The last identity follows from the empty-product convention.
Primes dividing $P$ exceed $z>x$, while prime divisors of $Q=Q(x)$
are at most $x$. Hence $(p,Q)=1$, and the map
\[
 t\bmod p\longmapsto b+Qt\bmod p
\]
is a bijection: the unique preimage of $a\bmod p$ is
$Q^{-1}(a-b)\bmod p$, where $Q^{-1}$ denotes the inverse
of $Q$ modulo $p$. Consequently $F_i(t)$ has exactly $\rho(p)$
roots modulo $p$: each root $a$ of $g$ gives the unique root
$t\equiv Q^{-1}(a-b-s_i)\pmod p$ of $F_i$.

The root sets of $F_i$ and $F_j$ are disjoint when $i\ne j$.
Indeed, a common root $t$ would give a common root $b+Qt$
of $g(v+s_i)$ and $g(v+s_j)$ modulo $p$. This is impossible
by Lemma~\ref{lem:separation}, since $p>z=x^{\alpha_f}$ and
$s_i,s_j$ are distinct elements of $[1,H]$.

\smallskip
\noindent\emph{The local moments.}
Let $t$ be uniform modulo $p$, and write
$q_p=\rho(p)/p$ and $h_p=h(p)=q_p/(1-q_p)$. The definition gives
\[
 \psi_{i,p}(t)=
 \begin{cases}
 -1,&p\mid F_i(t),\\[1mm]
 h_p,&p\nmid F_i(t).
 \end{cases}
\]
These two alternatives have probabilities $q_p$ and $1-q_p$.
Their first and second moments are therefore
\[
 \begin{aligned}
 \E_p\psi_{i,p}
 &=-q_p+(1-q_p)\frac{q_p}{1-q_p}=0,\\
 \E_p\psi_{i,p}^2
 &=q_p+(1-q_p)\frac{q_p^2}{(1-q_p)^2}
 =\frac{q_p}{1-q_p}=h_p.
 \end{aligned}
\]
If $i\ne j$, the two root events are disjoint. On the roots
of $F_i$ the product $\psi_{i,p}\psi_{j,p}$ equals $-h_p$,
and the same is true on the roots of $F_j$. The complement of
these two sets has probability $1-2q_p$, and on it the product
equals $h_p^2$. Hence
\[
 \begin{aligned}
 \E_p(\psi_{i,p}\psi_{j,p})
 &=-2q_ph_p+(1-2q_p)h_p^2\\
 &=\frac{-2q_p^2(1-q_p)+(1-2q_p)q_p^2}{(1-q_p)^2}
 =-\frac{q_p^2}{(1-q_p)^2}=-h_p^2.
 \end{aligned}
\]
Thus
\begin{equation}\label{eq:local-moments}
 \E_p\psi_{i,p}=0,\qquad
 \E_p\psi_{i,p}^2=h(p),\qquad
 \E_p(\psi_{i,p}\psi_{j,p})=-h(p)^2\quad(i\ne j).
\end{equation}
In particular, the local factors belonging to different positions
at the same prime are not independent. The independence used below
concerns distinct prime moduli.

\smallskip
\noindent\emph{Cancellation at an uncovered position.}
We call position $i$ \emph{uncovered at $t$} if $(F_i(t),P)=1$.
This is equivalent to $p\nmid F_i(t)$ for every $p\mid P$.
For such a position, each local factor equals $h(p)$, and hence
\[
 \psi_{i,r}(t)=\prod_{p\mid r}h(p)=h(r)
 \qquad(r\mid P).
\]
The equality also holds for $r=1$. Thus \eqref{eq:exact-cancellation}
implies
\[
 \sum_{r\mid P}a(r)\psi_{i,r}(t)
 =\sum_{r\mid P}a(r)h(r)=0.
\]
The full product
\[
 \prod_{i=1}^k\left(\sum_{r\mid P}a(r)\psi_{i,r}(t)\right)
\]
therefore vanishes whenever at least one position is uncovered.
The unrestricted expansion, however, includes divisors of $P$
far exceeding the averaging length $T$. To pass from complete averages
to averages over $1\le t\le T$, we retain only tuples whose coordinate
product is bounded and whose coordinates are pairwise coprime.

For $1\le j\le k$, set
\[
 \cR_j=\left\{(r_1,\ldots,r_j):
 \begin{array}{l}
 r_i\mid P,\quad(r_i,r_\ell)=1\ (i\ne\ell),\\
 r_1\cdots r_j\le D
 \end{array}\right\}.
\]
Let $\cR_0$ consist of the empty tuple, whose product is $1$.
Pairwise coprimality means that a prime can occur in at most one
coordinate of a tuple. The product bound ensures that, when two
terms are multiplied, the resulting modulus is at most $D^2$.
Define
\begin{equation}\label{eq:weight}
 R(t)=\sum_{\bm r\in\cR_k}
                \prod_{i=1}^k a(r_i)\psi_{i,r_i}(t),
 \qquad w(t)=R(t)^2\ge0,
\end{equation}
and introduce the finite averages
\begin{equation}\label{eq:S1S2}
 \begin{aligned}
 S_1&=\frac1T\sum_{t=1}^T w(t),\\
 S_2&=\frac1T\sum_{t=1}^T w(t)
                  \sum_{i=1}^k\ind_{(F_i(t),P)=1}.
 \end{aligned}
\end{equation}
The inner sum in $S_2$, defined in \eqref{eq:S1S2}, is the
nonnegative integer counting uncovered positions. If it were at
least $1$ for every $1\le t\le T$, multiplying by $w(t)\ge0$
and summing would give $S_2\ge S_1$. It is therefore enough to prove
$S_2<S_1$. We shall show, uniformly in the parameters of the lemma,
that $S_1$ is comparable to $A^k$ and $S_2=o_f(A^k)$.

\subsection{From complete averages to finite averages}\label{subsec:finite-averages}

For a $P$-periodic function $U$, write
\begin{equation}\label{DEF_E}
 \E U=\frac1P\sum_{t=0}^{P-1}U(t).
\end{equation}
We shall compare this complete average with
$T^{-1}\sum_{t=1}^T U(t)$ for the functions arising from the weight.

\smallskip
\noindent\emph{Uniformity and independence modulo the auxiliary primes.}
If $p\mid P$ and $a\bmod p$ is fixed, exactly $P/p$ integers
in $\{0,\ldots,P-1\}$ are congruent to $a$ modulo $p$.
Indeed, these integers are
$a,a+p,\ldots,a+(P/p-1)p$ when $0\le a<p$.
Thus each residue modulo $p$ has weight $1/p$ under $\E$,
and for every $p$-periodic function $U_p$,
\[
 \E U_p=\frac1p\sum_{a=0}^{p-1}U_p(a)=\E_pU_p.
\]
More generally, let $p_1,\ldots,p_\ell$ be distinct prime
divisors of $P$, and prescribe one residue $a_j\bmod p_j$
for each of them. The Chinese remainder theorem gives one class
modulo $q=p_1\cdots p_\ell$. As $q\mid P$, this class has
exactly $P/q$ representatives in a complete period modulo $P$.
Consequently
\[
 \E\prod_{j=1}^\ell\ind_{t\equiv a_j\pmod{p_j}}
 =\frac1q=\prod_{j=1}^\ell\frac1{p_j}.
\]
Thus residues at distinct primes are independent under the complete
average.

In particular, for functions $U_p$ depending only on $t\bmod p$,
the Chinese remainder bijection gives
\[
 \begin{aligned}
 \E\prod_{p\mid P}U_p(t)
 &=\frac1P\sum_{\substack{0\le a_p<p\\\text{for each }p\mid P}}
                     \prod_{p\mid P}U_p(a_p)\\
 &=\prod_{p\mid P}\left(\frac1p\sum_{a=0}^{p-1}U_p(a)\right)
 =\prod_{p\mid P}\E_pU_p.
 \end{aligned}
\]
In applying this identity, factors at the same prime must be grouped
together. Independence holds across prime moduli; factors at different
positions may be dependent.

\smallskip
\noindent\emph{Expansion into divisibility indicators.}
The identity
\begin{equation}\label{eq:local-expansion}
 \psi_{i,p}(t)=h(p)-(1+h(p))\ind_{p\mid F_i(t)}
\end{equation}
follows because $1+h(p)=p/(p-\rho(p))$.
Using \eqref{psi.def} and expanding the product of the identities
\eqref{eq:local-expansion}, we obtain, for every divisor $r\mid P$,
\begin{equation}\label{expansion.psi}
 \psi_{i,r}(t)
 =\sum_{d\mid r}(-1)^{\omega(d)}h(r/d)
                \prod_{p\mid d}(1+h(p))\,
                \ind_{d\mid F_i(t)}.
\end{equation}
Here
\[
 \prod_{p\mid d}\ind_{p\mid F_i(t)}
 =\ind_{d\mid F_i(t)}.
\]
The product is $1$ precisely when every prime divisor of $d$
divides $F_i(t)$. Squarefreeness of $d$ makes this equivalent to
$d\mid F_i(t)$; otherwise both sides vanish.

The formula \eqref{expansion.psi} also holds for $r=1$. In that case the only
divisor is $d=1$, and
\[
 (-1)^{\omega(1)}h(1)
 \prod_{p\mid1}(1+h(p))\,
 \ind_{1\mid F_i(t)}
 =1,
\]
in agreement with the definition $\psi_{i,1}(t)=1$.

For $\bm r\in\cR_k$, selecting a divisor $d_i\mid r_i$ in each
coordinate gives pairwise coprime $d_i$ with
$d_1\cdots d_k\le r_1\cdots r_k\le D$. Collecting coefficients
therefore gives an expansion (see \eqref{eq:weight})
\begin{equation}\label{eq:lambda-expansion}
 R(t)=\sum_{\bm d\in\cR_k}\lambda_{\bm d}
                          \prod_{i=1}^k\ind_{d_i\mid F_i(t)}.
\end{equation}
The coefficients can be written explicitly as
\[
 \lambda_{\bm d}
 =\sum_{\substack{\bm r\in\cR_k\\d_i\mid r_i\ (1\le i\le k)}}
   \prod_{i=1}^k\left(
    a(r_i)(-1)^{\omega(d_i)}h(r_i/d_i)
                  \prod_{p\mid d_i}(1+h(p))\right).
\]

To bound the sum of absolute coefficients, apply the triangle
inequality before collecting equal indicator products.

For $d\mid r\mid P$, write
\[
 c(r,d)=(-1)^{\omega(d)}h(r/d)
                         \prod_{p\mid d}(1+h(p)).
\]
Thus
\[
 \psi_{i,r}(t)
 =\sum_{d\mid r}c(r,d)\ind_{d\mid F_i(t)}.
\]
Since $h(e)>0$ for every $e\mid P$, we have
\[
 |c(r,d)|=h(r/d)\prod_{p\mid d}(1+h(p)).
\]
The formula for $\lambda_{\bm d}$ becomes
\[
 \lambda_{\bm d}
 =
 \sum_{\substack{\bm r\in\cR_k\\
                  d_i\mid r_i\ (1\le i\le k)}}
       \prod_{i=1}^k a(r_i)c(r_i,d_i).
\]
Contributions from different $\bm r$ to a fixed $\lambda_{\bm d}$
give, by the triangle inequality,
\[
 |\lambda_{\bm d}|
 \le
 \sum_{\substack{\bm r\in\cR_k\\
                  d_i\mid r_i\ (1\le i\le k)}}
       \prod_{i=1}^k |a(r_i)|\,|c(r_i,d_i)|.
\]

Sum over $\bm d\in\cR_k$ and interchange the finite sums.
For $\bm r\in\cR_k$, choosing $d_i\mid r_i$ for every $i$
always produces a tuple $\bm d$ in $\cR_k$, since
\[
 (d_i,d_j)=1\quad(i\ne j),
 \qquad
 d_1\cdots d_k\le r_1\cdots r_k\le D,
\]
and each $d_i$ divides $P$. For a fixed $\bm r$, the choices
$d_i\mid r_i$ are therefore independent, giving
\[
 \begin{aligned}
 \sum_{\bm d\in\cR_k}|\lambda_{\bm d}|
 &\le
 \sum_{\bm r\in\cR_k}
 \sum_{\substack{d_1\mid r_1,\ldots,d_k\mid r_k}}
       \prod_{i=1}^k |a(r_i)|\,|c(r_i,d_i)|\\
 &=
 \sum_{\bm r\in\cR_k}
       \prod_{i=1}^k
       \left(
       |a(r_i)|\sum_{d_i\mid r_i}|c(r_i,d_i)|
       \right).
 \end{aligned}
\]

For each squarefree divisor $r\mid P$, the inner sum factors as
\[
 \begin{aligned}
 \sum_{d\mid r}|c(r,d)|
 &=\sum_{d\mid r}h(r/d)\prod_{p\mid d}(1+h(p))\\
 &=\prod_{p\mid r}\bigl(h(p)+(1+h(p))\bigr)\\
 &=\prod_{p\mid r}(1+2h(p)).
 \end{aligned}
\]
These equalities also hold for $r=1$, since all sides are $1$.

Substituting this identity gives
\[
 \sum_{\bm d\in\cR_k}|\lambda_{\bm d}|
 \le
 \sum_{\bm r\in\cR_k}
       \prod_{i=1}^k
       \left(
       |a(r_i)|\prod_{p\mid r_i}(1+2h(p))
       \right).
\]
By \eqref{eq:coefficients-basic}, $|a(r_i)|\le1$.
Moreover, the coordinates of $\bm r$ are pairwise
coprime, so each prime divisor of $r_1\cdots r_k$
occurs in exactly one coordinate. Consequently,
\[
 \sum_{\bm d\in\cR_k}|\lambda_{\bm d}|
 \le
 \sum_{\bm r\in\cR_k}
       \prod_{p\mid r_1\cdots r_k}(1+2h(p)).
\]

To bound the product, put $n=r_1\cdots r_k$.
We have $n\mid P$, $n\le D$, and every prime divisor of $n$
exceeds $z$, so
\[
 \omega(n)\log z
 \le\sum_{p\mid n}\log p
 =\log n\le\log D.
\]
Since $h(p)\le2m/p$, the inequality $1+s\le e^s$ for
$s\ge0$ yields
\[
 \begin{aligned}
 \prod_{p\mid n}(1+2h(p))
 &\le\exp\left(2\sum_{p\mid n}h(p)\right)\\
 &\le\exp\left(4m\sum_{p\mid n}\frac1p\right)\\
 &\le\exp\left(\frac{4m\omega(n)}z\right)\\
 &\le\exp\left(\frac{4m\log D}{z\log z}\right).
 \end{aligned}
\]
For our parameters,
\[
 \frac{4m\log D}{z\log z}
 =
 \frac{m}{\alpha_f x^{\alpha_f-1}\log x}
 \longrightarrow0,
\]
because $\log D=x/4$, $z=x^{\alpha_f}$, and
$\alpha_f=m^2+6\ge7$. Thus
\[
 \prod_{p\mid n}(1+2h(p))\le2
\]
for all sufficiently large $x$, uniformly over
$n\mid P$ with $n\le D$. The estimate includes $n=1$,
for which the product is empty and equals $1$.

We now count the tuples with product $n$. Fix $n\mid P$ with
$n\le D$. If $n>1$, squarefreeness of $P$ allows us to write
\[
 n=p_1\cdots p_\ell,\qquad \ell=\omega(n),
\]
where $p_1,\ldots,p_\ell$ are distinct primes.

A tuple $(r_1,\ldots,r_k)\in\cR_k$ with $r_1\cdots r_k=n$
assigns each prime $p_j$ to a unique coordinate: the prime divides
the product $n$, and coprimality prevents it from dividing two
coordinates. This defines a map
\[
 \varphi:\{1,\ldots,\ell\}\longrightarrow\{1,\ldots,k\},
\]
where $\varphi(j)=i$ means that $p_j\mid r_i$.

Conversely, every map
$\varphi:\{1,\ldots,\ell\}\to\{1,\ldots,k\}$ determines a tuple by
\[
 r_i=\prod_{\substack{1\le j\le\ell\\\varphi(j)=i}}p_j
 \qquad(1\le i\le k),
\]
with an empty product interpreted as $1$. Each coordinate
is a divisor of $n$, and hence a divisor of $P$. A prime
$p_j$ occurs only in the coordinate indexed by $\varphi(j)$,
so the coordinates are pairwise coprime. Moreover, every
$p_j$ occurs in one of them, and therefore
\[
 r_1\cdots r_k=\prod_{j=1}^{\ell}p_j=n\le D.
\]
Thus the resulting tuple belongs to $\cR_k$.

The constructions are inverse: the map records the prime factors of
each coordinate, and the product formula puts each $p_j$ back into
coordinate $\varphi(j)$. Thus tuples in $\cR_k$ with product $n$
are in bijection with the maps
\[
 \varphi:\{1,\ldots,\ell\}\longrightarrow\{1,\ldots,k\}.
\]

For each of the $\ell$ arguments $j$, the value $\varphi(j)$
is an independent choice from $\{1,\ldots,k\}$. There are
therefore $k^\ell$ maps, and
\[
 \#\{\bm r\in\cR_k:r_1\cdots r_k=n\}
 =k^\ell=k^{\omega(n)}.
\]
For $n=1$, the only possible tuple is $(1,\ldots,1)$,
because all coordinates are positive integers. Since
$\omega(1)=0$ and $k\ge1$, the same formula gives
$k^{\omega(1)}=1$. 

Grouping the preceding estimate by $n$ therefore gives
\[
 \sum_{\bm d\in\cR_k}|\lambda_{\bm d}|
 \le2\sum_{\substack{n\mid P\\n\le D}}k^{\omega(n)}.
\]
Since $k\ge1$ and $\omega(n)\le\log D/\log z$,
\[
 k^{\omega(n)}
 =\exp(\omega(n)\log k)
 \le\exp\left(\frac{\log D\log k}{\log z}\right)
 =D^{\log k/\log z}.
\]
We conclude that
\begin{equation}\label{eq:l1}
 \begin{aligned}
 \sum_{\bm d\in\cR_k}|\lambda_{\bm d}|
 &\le2\sum_{\substack{n\mid P\\n\le D}}k^{\omega(n)}\\
 &\le2D^{1+\log k/\log z}
 \le D^{3/2}.
 \end{aligned}
\end{equation}
For the last inequality, recall that $1\le k\le x$ and
$\log z=\alpha_f\log x$, so
\[
 \frac{\log k}{\log z}
 \le\frac1{\alpha_f}\le\frac17.
\]
Therefore
\[
 2D^{1+\log k/\log z}\le2D^{8/7}\le D^{3/2}
\]
as soon as $D^{5/14}\ge2$. Since $D=e^{x/4}\to\infty$,
this holds for all sufficiently large $x$, uniformly
in $k$.

\smallskip
\noindent\emph{Counting the residue classes in a product of indicators.}
With $F_i$ and $P$ as in \eqref{DEF_F_i} and \eqref{DEF_P},
put, for $\bm d\in\cR_k$,
\[
 J_{\bm d}(t)=\prod_{i=1}^k\ind_{d_i\mid F_i(t)}.
\]
Expanding the square in \eqref{eq:lambda-expansion} gives
\begin{equation}\label{R^2}
 R(t)^2=\sum_{\bm d,\bm e\in\cR_k}
        \lambda_{\bm d}\lambda_{\bm e}J_{\bm d}(t)J_{\bm e}(t).
\end{equation}
We compare the two averages term by term for
$J_{\bm d}J_{\bm e}$ and sum the errors against the absolute
values of the coefficients.

Fix two tuples $\bm d,\bm e\in\cR_k$. By definition,
\[
 J_{\bm d}(t)J_{\bm e}(t)
 =\prod_{i=1}^k
       \ind_{d_i\mid F_i(t)}\ind_{e_i\mid F_i(t)}.
\]
Thus this product equals $1$ if and only if all the conditions
\[
 d_i\mid F_i(t),\qquad e_i\mid F_i(t)
 \qquad(1\le i\le k)
\]
hold simultaneously. Put
\[
 q=\operatorname{lcm}(d_1,\ldots,d_k,e_1,\ldots,e_k).
\]
Since every coordinate divides the squarefree integer $P$, we have
$q\mid P$, and $q$ is squarefree. The coordinates are pairwise
coprime within each tuple, so
\begin{equation}\label{D^2}
 \begin{aligned}
 q&=\operatorname{lcm}(d_1\cdots d_k,e_1\cdots e_k)\\
  &\le(d_1\cdots d_k)(e_1\cdots e_k)\le D^2.
 \end{aligned}
\end{equation}
There are two cases, according to whether a prime is assigned
to distinct positions.

\smallskip
\noindent\emph{Case 1: $(d_i,e_j)>1$ for some $i\ne j$.}
Choose a prime $p\mid(d_i,e_j)$. Then $p\mid P$, so $p>z$.
If $J_{\bm d}(t)J_{\bm e}(t)=1$ for some integer $t$, we would have
\[
 p\mid F_i(t)=g(b+Qt+s_i),\qquad
 p\mid F_j(t)=g(b+Qt+s_j).
\]
Writing $v=b+Qt\bmod p$, this would make $v$ a common root of
$g(v+s_i)$ and $g(v+s_j)$ modulo $p$. The shifts $s_i$ and $s_j$
are distinct elements of $[1,H]$, and $p>x^{\alpha_f}$.
Lemma~\ref{lem:separation} rules out such a common root.
Therefore
\[
 J_{\bm d}(t)J_{\bm e}(t)=0
 \qquad\text{for every }t\in\cZ.
\]
In this case both its complete average and its finite average
are zero.

\smallskip
\noindent\emph{Case 2: $(d_i,e_j)=1$ for all $i\ne j$.}
For each $i$, define
\[
 q_i=\operatorname{lcm}(d_i,e_i).
\]
The two conditions belonging to the same position are equivalent
to a single condition:
\[
 d_i\mid F_i(t)\ \text{and}\ e_i\mid F_i(t)
 \quad\Longleftrightarrow\quad q_i\mid F_i(t).
\]
Moreover, membership in $\cR_k$ gives coprimality within each tuple,
and the assumption of this case gives coprimality between distinct
coordinates of the two tuples. Thus
\[
 (d_i,d_j)=(e_i,e_j)=(d_i,e_j)=(e_i,d_j)=1
 \qquad(i\ne j).
\]
It follows that the $q_i$ are pairwise coprime. Indeed, a prime
dividing both $q_i$ and $q_j$ would divide one of $d_i,e_i$ and
one of $d_j,e_j$, contradicting one of these four equalities.
Consequently,
\[
 q_1\cdots q_k
 =\operatorname{lcm}(q_1,\ldots,q_k)=q.
\]

We now count the roots of $F_i$ modulo $q_i$.
For every prime $p\mid q_i$, we have $p\mid P$, hence
$p>z>x$ and $(Q,p)=1$. The congruence
\[
 F_i(t)=g(b+Qt+s_i)\equiv0\pmod p
\]
has exactly $\rho(p)$ solutions: each root $a\bmod p$ of $g$
gives the unique solution
\[
 t\equiv Q^{-1}(a-b-s_i)\pmod p,
\]
where $Q^{-1}$ is the inverse of $Q$ modulo $p$. Distinct roots
$a$ give distinct residues $t$, and every solution arises in
this way.

Since $q_i$ is squarefree, the condition $q_i\mid F_i(t)$ is
equivalent to imposing these congruences at every prime dividing
$q_i$. Choosing one of the $\rho(p)$ solutions at each such prime
and applying the Chinese remainder theorem gives exactly
\[
 \prod_{p\mid q_i}\rho(p)
\]
solutions modulo $q_i$. Let $\mathcal A_i$ be their representatives
in $\{0,\ldots,q_i-1\}$. When $q_i=1$, there is no divisibility
restriction; we take $\mathcal A_i=\{0\}$, and its cardinality is
the empty product $1$.

The moduli $q_1,\ldots,q_k$ are pairwise coprime. The Chinese
remainder theorem therefore supplies, for every choice
\[
 (a_1,\ldots,a_k)\in\mathcal A_1\times\cdots\times\mathcal A_k,
\]
one residue class $a\bmod q$ satisfying
$a\equiv a_i\pmod{q_i}$ for all $i$. Different choices produce
different classes: if the classes modulo $q$ agreed, their
reductions modulo each $q_i$ would agree as well. Conversely,
every class satisfying all the divisibility conditions reduces
to an element of each $\mathcal A_i$.

Let $\mathcal A_{\bm d,\bm e}\subseteq\{0,\ldots,q-1\}$ denote
the resulting set of representatives. The preceding bijection gives
\[
 \begin{aligned}
 |\mathcal A_{\bm d,\bm e}|
 &=\prod_{i=1}^k|\mathcal A_i|
 =\prod_{i=1}^k\prod_{p\mid q_i}\rho(p)\\
 &=\prod_{p\mid q}\rho(p)=:\nu(q).
 \end{aligned}
\]
Here we used the pairwise coprimality of the $q_i$. For $q=1$, we have $\nu(1)=1$ and
$\mathcal A_{\bm d,\bm e}=\{0\}$.

Combining the two cases, we obtain
\[
 J_{\bm d}(t)J_{\bm e}(t)=
 \begin{cases}
  0,&\text{if }(d_i,e_j)>1\text{ for some }i\ne j,\\[2mm]
  \displaystyle\sum_{a\in\mathcal A_{\bm d,\bm e}}
                 \ind_{t\equiv a\pmod q},
    &\text{if }(d_i,e_j)=1\text{ for all }i\ne j.
 \end{cases}
\]
In the second case the classes are distinct, so at most one indicator
is $1$. Their sum is the indicator of the solution set; for $q=1$
it is identically $1$. In general, there are $\nu(q)$ residue
classes, and this number need not be one.

For later use, we bound this number uniformly. Since $\rho(p)\le m$
and every prime divisor of $q$ exceeds $z$, from \eqref{D^2} we obtain
\[
 \nu(q)\le m^{\omega(q)},\qquad
 \omega(q)\log z\le\sum_{p\mid q}\log p
 =\log q\le2\log D.
\]
It follows that
\[
 \nu(q)\le
 \exp\left(\frac{2\log D\log m}{\log z}\right)
 =D^{2\log m/\log z}.
\]
When $m=1$, every prime factor of $P$ has $\rho(p)=1$, so
$\nu(q)=1$, in agreement with this bound. The bound also includes
$q=1$.

\smallskip
\noindent\emph{The error in passing to the finite interval.}
Using $\E$ as defined in \eqref{DEF_E}, consider a fixed pair
$\bm d,\bm e$. In Case~1 both averages vanish. In Case~2,
take one of the allowed classes $a\bmod q$. Since $q\mid P$,
there are exactly $P/q$ integers in $\{0,\ldots,P-1\}$ belonging
to that class. Hence
\[
 \E\,\ind_{t\equiv a\pmod q}
 =\frac1P\cdot\frac Pq=\frac1q.
\]
Summing this identity over the $\nu(q)$ distinct allowed classes gives
\[
 \E(J_{\bm d}J_{\bm e})
 =\sum_{a\in\mathcal A_{\bm d,\bm e}}\frac1q
 =\frac{\nu(q)}q.
\]

To count representatives in the shorter interval, write
\[
 T=\ell q+v,\qquad 0\le v<q.
\]
Each of the $\ell$ complete blocks of $q$ consecutive integers
contains exactly one representative of $a\bmod q$, and the final
block of $v$ integers contains at most one. Therefore
\[
 \#\{1\le t\le T:t\equiv a\pmod q\}
 \in\{\ell,\ell+1\}.
\]
Since $T/q=\ell+v/q$, we obtain
\[
 \left|\#\{1\le t\le T:t\equiv a\pmod q\}-\frac Tq\right|\le1.
\]
After division by $T$, this says
\[
 \left|\frac1T\sum_{t=1}^T\ind_{t\equiv a\pmod q}
                  -\frac1q\right|\le\frac1T.
\]
The estimate is independent of the representative $a$.
For $q=1$ the count is exactly $T$, and the error is zero.

In Case~2, summing over the allowed classes and applying the triangle
inequality gives
\[
 \begin{aligned}
 &\left|\frac1T\sum_{t=1}^TJ_{\bm d}(t)J_{\bm e}(t)
                   -\E(J_{\bm d}J_{\bm e})\right|\\
 &\quad=
 \left|\sum_{a\in\mathcal A_{\bm d,\bm e}}
       \left(\frac1T\sum_{t=1}^T\ind_{t\equiv a\pmod q}
                           -\frac1q\right)\right|\\
 &\quad\le\sum_{a\in\mathcal A_{\bm d,\bm e}}\frac1T
 =\frac{\nu(q)}T
 \le\frac{D^{2\log m/\log z}}T.
 \end{aligned}
\]
The same final bound holds in Case~1, since its left-hand side
is zero. We have therefore proved it for every pair
$\bm d,\bm e\in\cR_k$.

By linearity of the two averages and the expansion of $R^2$ (see also \eqref{eq:weight}, \eqref{eq:S1S2}, and \eqref{R^2}),
\[
 \begin{aligned}
 S_1-\E R^2
 =\sum_{\bm d,\bm e\in\cR_k}
 \lambda_{\bm d}\lambda_{\bm e}
 \left(\frac1T\sum_{t=1}^T J_{\bm d}(t)J_{\bm e}(t)
                         -\E(J_{\bm d}J_{\bm e})\right).
 \end{aligned}
\]
Applying the triangle inequality and then \eqref{eq:l1}, we find
\[
 \begin{aligned}
 |S_1-\E R^2|
 &\le\frac{D^{2\log m/\log z}}T
       \sum_{\bm d,\bm e\in\cR_k}
                         |\lambda_{\bm d}|\,|\lambda_{\bm e}|\\
 &=\frac{D^{2\log m/\log z}}T
       \left(\sum_{\bm d\in\cR_k}|\lambda_{\bm d}|\right)^2\\
 &\le\frac{D^{2\log m/\log z}}T(D^{3/2})^2
 =\frac{D^{3+2\log m/\log z}}T.
 \end{aligned}
\]
Accordingly,
\begin{equation}\label{eq:averaging-error}
 \left|S_1-\E R^2\right|\le\mathcal E_x,
 \qquad
 \mathcal E_x:=\frac{D^{3+2\log m/\log z}}T
       =\exp\bigl(-x/4+o_f(x)\bigr).
\end{equation}
To verify the last equality, use $D=e^{x/4}$,
$\log z=\alpha_f\log x$, and
$\log T=x+O(e^{-x})$, which follows from $T=\lfloor e^x\rfloor$.
Then
\[
 \begin{aligned}
 \log\mathcal E_x
 &=\left(3+\frac{2\log m}{\alpha_f\log x}\right)\frac x4
                       -x+O(e^{-x})\\
 &=-\frac x4+\frac{x\log m}{2\alpha_f\log x}+O(e^{-x}).
 \end{aligned}
\]
For fixed $f$, the second term is $o_f(x)$; it vanishes when
$m=1$. In particular, $\mathcal E_x\le e^{-x/8}$ for all
sufficiently large $x$.

The same argument applies if any subset of the terms in
\eqref{eq:lambda-expansion} is retained. More precisely, for
$\mathcal J\subseteq\cR_k$, set
\[
 R_{\mathcal J}(t)=\sum_{\bm d\in\mathcal J}
                                     \lambda_{\bm d}J_{\bm d}(t).
\]
Every retained pair has the same error bound, and
$\sum_{\bm d\in\mathcal J}|\lambda_{\bm d}|
 \le\sum_{\bm d\in\cR_k}|\lambda_{\bm d}|$. Hence
\begin{equation}\label{SUM_R_SUBSET}
 \left|\frac1T\sum_{t=1}^TR_{\mathcal J}(t)^2
                          -\E R_{\mathcal J}^2\right|
 \le\frac{D^{2\log m/\log z}}T
       \left(\sum_{\bm d\in\mathcal J}|\lambda_{\bm d}|\right)^2
 \le\mathcal E_x.
\end{equation}
We shall use this bound for majorants of uncovered positions.
The estimates are uniform in $b$ and the allowed shifts: only the
locations of the residue classes depend on these parameters.
Their number is bounded in terms of $q$ and $m$, and each class
contributes an error of at most $1/T$.

\subsection{A diagonal estimate for the complete average}

\begin{lemma}\label{lem:diagonal}
Let $0\le j\le k$, and let $c_{\bm r}$, $\bm r\in\cR_j$, be
arbitrary real coefficients. Then
\begin{align}
 &\left|\E\left(\sum_{\bm r\in\cR_j}c_{\bm r}
                 \prod_{i=1}^j\psi_{i,r_i}\right)^2
       -\sum_{\bm r\in\cR_j}c_{\bm r}^2\prod_{i=1}^j h(r_i)\right|
 \notag\\
 &\hspace{20mm}\ll_f\frac{k\log D}{z}
              \sum_{\bm r\in\cR_j}c_{\bm r}^2\prod_{i=1}^j h(r_i).
 \label{eq:diagonal}
\end{align}
\end{lemma}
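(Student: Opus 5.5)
Expanding the square, we have to evaluate $\E\prod_{i}\psi_{i,r_i}\psi_{i,r'_i}$ for a pair of tuples $\bm r,\bm r'\in\cR_j$. By the independence of residues at distinct primes under $\E$, this average factors over the primes $p\mid P$. For each $p$ we group the local factors at that prime. Because the tuples have pairwise coprime coordinates, $p$ divides at most one coordinate $r_i$ and at most one coordinate $r'_{i'}$. If $p$ divides exactly one of $\prod r_i$ and $\prod r'_i$, the local factor is $\E_p\psi_{i,p}=0$ by \eqref{eq:local-moments}, and the term vanishes. So only pairs with $\prod_i r_i=\prod_i r'_i=:n$ survive, i.e.\ $\bm r'$ is a reassignment of the primes of $n$ among the coordinates. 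The local factor at $p\mid n$ is $h(p)$ if $p$ sits in the same coordinate in both tuples, and $-h(p)^2$ otherwise. Hence
\[
 \E\Bigl(\prod_i\psi_{i,r_i}\Bigr)\Bigl(\prod_i\psi_{i,r'_i}\Bigr)
 =\prod_{p\in\mathcal S}h(p)\prod_{p\in\mathcal D}\bigl(-h(p)^2\bigr),
\]
where $\mathcal S$ and $\mathcal D$ are the primes of $n$ whose positions agree, respectively differ.

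The diagonal $\bm r'=\bm r$ gives exactly $\sum c_{\bm r}^2\prod h(r_i)$. For the off-diagonal terms we use $|c_{\bm r}c_{\bm r'}|\le\frac12(c_{\bm r}^2+c_{\bm r'}^2)$ and the symmetry of the term in $\bm r$ and $\bm r'$. Thus it suffices to bound, for each fixed $\bm r$ with product $n$, the quantity
\[
 \sum_{\bm r'\ne\bm r}\prod_{p\in\mathcal D(\bm r,\bm r')}h(p)
\]
relative to the weight. Indeed, the product $h(n)\prod_{p\in\mathcal D}h(p)$ equals $h(\prod r_i)$ times the extra factor $\prod_{p\in\mathcal D}h(p)$.

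For fixed $\bm r$, each $\bm r'$ is determined by choosing a nonempty set $\mathcal D$ of primes of $n$ to move, and for each moved prime a new position among the at most $j-1\le k$ other coordinates. This gives
\[
 \sum_{\bm r'\ne\bm r}\prod_{p\in\mathcal D}h(p)\le\prod_{p\mid n}\bigl(1+k\,h(p)\bigr)-1 .
\]
The product constraint $\le D$ is automatic, since the product stays $n$. We then bound this using $h(p)\le 2m/p\le 2m/z$ and $\omega(n)\le \log D/\log z$:
\[
 \prod_{p\mid n}(1+kh(p))-1\le \exp\Bigl(\frac{2mk\,\omega(n)}{z}\Bigr)-1\ll_f \frac{k\log D}{z},
\]
where the last step needs the exponent to be $O(1)$. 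That holds because $k\log D/z\le x^2/z\to0$.

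The main step requiring care is the bookkeeping in the symmetrization. Both $\bm r$ and $\bm r'$ have the same $n$, so $h(\prod r_i)=h(\prod r'_i)=h(n)$, and the weight attached to $c_{\bm r'}^2$ is the same $h(n)$. Summing the $\frac12 c_{\bm r}^2$ and $\frac12 c_{\bm r'}^2$ halves then gives the bound $\ll_f \frac{k\log D}{z}\sum c_{\bm r}^2\prod h(r_i)$, which is \eqref{eq:diagonal}.
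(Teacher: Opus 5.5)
Your proposal is correct and follows essentially the same route as the paper. Like the paper, you use orthogonality across distinct coordinate products, the explicit block entries $h(n)\prod_{p\in\mathcal D}(-h(p))$, the row sum $\prod_{p\mid n}(1+(j-1)h(p))-1$ bounded via $\omega(n)\le\log D/\log z$, and the symmetrization $2|ab|\le a^2+b^2$. The only differences are cosmetic: the paper normalizes to a unit-diagonal Gram matrix and computes the row sum exactly, whereas you work unnormalized and replace $j-1$ by $k$.
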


\begin{proof}
If $j=0$, the set $\cR_0$ contains only the empty tuple. The
function being averaged is a constant and the empty product of
$h$-factors is $1$, so the two quantities in the absolute value
are equal. We henceforth assume $1\le j\le k$.

\smallskip
\noindent\emph{Step 1: normalization and the diagonal terms.}
For $\bm r\in\cR_j$, put
\[
 n(\bm r)=r_1\cdots r_j,\qquad
 \Psi_{\bm r}(t)=\prod_{i=1}^j\psi_{i,r_i}(t).
\]
Because the coordinates are pairwise coprime, $n(\bm r)$ is
squarefree, divides $P$, and is at most $D$. Also
\[
 h(n(\bm r))=\prod_{i=1}^jh(r_i)>0.
\]
Every prime dividing $n(\bm r)$ appears in exactly one coordinate.
Independence across primes and \eqref{eq:local-moments} give
\[
 \E\Psi_{\bm r}^2
 =\prod_{p\mid n(\bm r)}h(p)=h(n(\bm r)).
\]
Introduce normalized functions and coefficients
\[
 \Phi_{\bm r}(t)=\frac{\Psi_{\bm r}(t)}{\sqrt{h(n(\bm r))}},
 \qquad v_{\bm r}=c_{\bm r}\sqrt{h(n(\bm r))}.
\]
Then $\E\Phi_{\bm r}^2=1$. The sum to be squared becomes
$\sum_{\bm r}v_{\bm r}\Phi_{\bm r}$, with diagonal contribution
\[
 I:=\sum_{\bm r\in\cR_j}v_{\bm r}^2
   =\sum_{\bm r\in\cR_j}c_{\bm r}^2\prod_{i=1}^jh(r_i).
\]

Let
\[
 G(\bm r,\bm s)=\E(\Phi_{\bm r}\Phi_{\bm s})
\]
be the Gram matrix of these normalized functions. It is real
and symmetric, with diagonal entries $1$. We bound the sum of
absolute values of the off-diagonal entries in each row.

\smallskip
\noindent\emph{Step 2: orthogonality for different coordinate products.}
Suppose $n(\bm r)\ne n(\bm s)$. Since both products are
squarefree, some prime divides exactly one of them. At that prime,
the product $\Psi_{\bm r}\Psi_{\bm s}$ contains exactly one local
factor $\psi_{i,p}$. Its local mean is zero by
\eqref{eq:local-moments}. Factoring the complete average over
primes therefore gives
\[
 G(\bm r,\bm s)=0
 \qquad\text{if }n(\bm r)\ne n(\bm s).
\]
Hence the matrix splits into blocks indexed by the coordinate product.

\smallskip
\noindent\emph{Step 3: the entries within one block.}
Fix a squarefree $n\mid P$ with $n\le D$. For a tuple $\bm r=(r_1,\ldots,r_j)\in\cR_j$ with $r_1 \cdots r_j = n$, let $\iota_{\bm r}(p)$ denote the unique coordinate containing
$p$. Then
\[
 \Phi_{\bm r}(t)
 =\prod_{p\mid n}
             \frac{\psi_{\iota_{\bm r}(p),p}(t)}{\sqrt{h(p)}}.
\]
For two tuples with this product, independence gives
\[
 G(\bm r,\bm s)
 =\prod_{p\mid n}
   \frac{\E_p(\psi_{\iota_{\bm r}(p),p}
                    \psi_{\iota_{\bm s}(p),p})}{h(p)}.
\]
By \eqref{eq:local-moments}, if the assignments agree at $p$, the numerator is $h(p)$; if
they differ, it is $-h(p)^2$. Hence
\[
 G(\bm r,\bm s)
 =\prod_{\substack{p\mid n\\
                \iota_{\bm r}(p)\ne\iota_{\bm s}(p)}}(-h(p)).
\]
The empty product is $1$, in agreement with the diagonal entry.

Fix a tuple $\bm r=(r_1,\ldots,r_j)\in\cR_j$ satisfying
\[
 r_1\cdots r_j=n.
\]
To sum $|G(\bm r,\bm s)|$ over $\bm s\in\cR_j$ with
$n(\bm s)=n$, we describe $\bm s$ by its assignments of primes
to coordinates. We put
\[
 \mathcal P(n)=\{p\in\PP:p\mid n\}.
\]
For each $p\in\mathcal P(n)$, the index
$\iota_{\bm r}(p)\in\{1,\ldots,j\}$ is fixed. We define
\[
 \gamma_p(\ell)=
 \begin{cases}
  1,&\ell=\iota_{\bm r}(p),\\[1mm]
  h(p),&\ell\ne\iota_{\bm r}(p),
 \end{cases}
 \qquad 1\le\ell\le j.
\]
Taking absolute values in the preceding formula for
$G(\bm r,\bm s)$, and using $h(p)>0$, gives
\[
 |G(\bm r,\bm s)|
 =\prod_{p\mid n}\gamma_p\bigl(\iota_{\bm s}(p)\bigr)
 \qquad\bigl(n(\bm s)=n\bigr).
\]
Thus a prime assigned to the same coordinate in $\bm r$
and $\bm s$ contributes $1$, whereas a prime assigned to
different coordinates contributes $h(p)$.

Tuples with product $n$ correspond to maps on $\mathcal P(n)$.
Indeed, let
\[
 \varphi:\mathcal P(n)\longrightarrow\{1,\ldots,j\}
\]
be any map, and define
\[
 s_\ell=\prod_{\substack{p\mid n\\\varphi(p)=\ell}}p
 \qquad(1\le\ell\le j),
\]
where an empty product is $1$. Every coordinate $s_\ell$
divides $n$ and therefore divides $P$. Each prime divisor
of $n$ occurs in exactly one coordinate, namely the coordinate
indexed by $\varphi(p)$. Consequently the coordinates are
pairwise coprime and
\[
 s_1\cdots s_j=\prod_{p\mid n}p=n\le D.
\]
Here we used the squarefreeness of $n$. These properties show
that $\bm s=(s_1,\ldots,s_j)$ belongs to $\cR_j$ and has
$n(\bm s)=n$.

Conversely, if $\bm s\in\cR_j$ and $s_1\cdots s_j=n$,
each prime $p\mid n$ belongs to a unique coordinate, indexed by
$\iota_{\bm s}(p)$. The map $\varphi(p)=\iota_{\bm s}(p)$
then recovers the tuple by the product formula. Starting instead
from a map $\varphi$, the resulting tuple satisfies
$\iota_{\bm s}(p)=\varphi(p)$ for every $p\mid n$.
This proves the bijection
\[
 \{\bm s\in\cR_j:n(\bm s)=n\}
 \longleftrightarrow
 \{\varphi:\mathcal P(n)\to\{1,\ldots,j\}\}.
\]

All assignments are allowed, since the coordinate product remains
$n\le D$. The choices at distinct primes are therefore unrestricted,
and the sum factors as
\[
 \begin{aligned}
 \sum_{\substack{\bm s\in\cR_j\\n(\bm s)=n}}
       |G(\bm r,\bm s)|
 &=\sum_{\varphi:\mathcal P(n)\to\{1,\ldots,j\}}
       \prod_{p\mid n}\gamma_p\bigl(\varphi(p)\bigr)\\
 &=\prod_{p\mid n}
       \left(\sum_{\ell=1}^j\gamma_p(\ell)\right)\\
 &=\prod_{p\mid n}\bigl(1+(j-1)h(p)\bigr).
 \end{aligned}
\]

The diagonal tuple $\bm s=\bm r$ corresponds to
$\varphi(p)=\iota_{\bm r}(p)$ for every $p\mid n$, and contributes
$|G(\bm r,\bm r)|=1$. By Step~2, entries with $n(\bm s)\ne n$
vanish. Subtracting the diagonal contribution gives the full
off-diagonal row sum:
\[
 \sum_{\substack{\bm s\in\cR_j\\\bm s\ne\bm r}}
       |G(\bm r,\bm s)|
 =\prod_{p\mid n}\bigl(1+(j-1)h(p)\bigr)-1.
\]
For $n=1$ this formula also holds: the product on the right is $1$,
and the off-diagonal row sum is zero. If $j=1$, there is
likewise only one tuple with product $n$, namely $(n)$,
and the same formula again gives zero.

\smallskip
\noindent\emph{Step 4: bounding the row sums.}
For any $n\mid P$ with $n\le D$, we have $h(p)\le2m/p$ and
$\omega(n)\le\log D/\log z$. Therefore
\[
 \begin{aligned}
 \prod_{p\mid n}\bigl(1+(j-1)h(p)\bigr)-1
 &\le\exp\left((j-1)\sum_{p\mid n}h(p)\right)-1\\
 &\le\exp\left(\frac{2mk\omega(n)}z\right)-1
 \le\exp\left(\frac{2mk\log D}{z\log z}\right)-1.
 \end{aligned}
\]
The exponent is at most
\[
 \frac{m}{2\alpha_f}\frac{x^{2-\alpha_f}}{\log x},
\]
because $k\le x$, $\log D=x/4$, and $z=x^{\alpha_f}$.
It tends to zero uniformly in $j$ and $k$. For $0\le v\le1$,
the identity $e^v-1=\int_0^v e^s\,ds$ gives $e^v-1\le ev$.
Consequently, for all sufficiently large $x$,
\[
 \delta_x:=\max_{\bm r\in\cR_j}
       \sum_{\substack{\bm s\in\cR_j\\\bm s\ne\bm r}}
                         |G(\bm r,\bm s)|
 \ll_f\frac{k\log D}{z\log z}
 \ll_f\frac{k\log D}{z}.
\]

\smallskip
\noindent\emph{Step 5: from row sums to the quadratic form.}
Subtracting the diagonal contribution $I$ and using
$2|ab|\le a^2+b^2$, we find
\[
 \begin{aligned}
 \left|\E\left(\sum_{\bm r}v_{\bm r}\Phi_{\bm r}\right)^2-I\right|
 &\le\sum_{\bm r\ne\bm s}
             |G(\bm r,\bm s)|\,|v_{\bm r}v_{\bm s}|\\
 &\le\frac12\sum_{\bm r\ne\bm s}
             |G(\bm r,\bm s)|(v_{\bm r}^2+v_{\bm s}^2)\\
 &=\sum_{\bm r}v_{\bm r}^2
       \sum_{\bm s\ne\bm r}|G(\bm r,\bm s)|\\
 &\le\delta_x\sum_{\bm r}v_{\bm r}^2
 \ll_f\frac{k\log D}{z}\,I.
 \end{aligned}
\]
In the third line we used symmetry: the two sums
coming from $v_{\bm r}^2$ and $v_{\bm s}^2$ are equal after
interchanging the indices. Substituting the definitions of $I$,
$v_{\bm r}$, and $\Phi_{\bm r}$ proves \eqref{eq:diagonal}.
\end{proof}

\begin{lemma}[The total weight]\label{lem:S1}
Uniformly under the assumptions of Lemma~\ref{lem:translate},
\begin{equation}\label{eq:S1-asymptotic}
 S_1=(1+o_f(1))A^k.
\end{equation}
\end{lemma}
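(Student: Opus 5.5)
The plan is to compare $S_1$ with the complete average $\E R^2$, evaluate $\E R^2$ by the diagonal Lemma~\ref{lem:diagonal}, and then remove the truncation in $\cR_k$ by Rankin's trick.

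\emph{Step 1: reduction to the complete average.} By \eqref{eq:averaging-error}, $|S_1-\E R^2|\le\mathcal E_x\le e^{-x/8}$. Since $A\ge1$ by \eqref{eq:coefficients-basic}, we have $A^k\ge1$, so this error is $o(A^k)$. It therefore suffices to show $\E R^2=(1+o_f(1))A^k$.

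\emph{Step 2: the diagonal evaluation.} Apply Lemma~\ref{lem:diagonal} with $j=k$ and $c_{\bm r}=\prod_{i}a(r_i)$. This gives
\[
 \E R^2=\Bigl(1+O_f\Bigl(\frac{k\log D}{z}\Bigr)\Bigr)\Sigma,
 \qquad
 \Sigma:=\sum_{\bm r\in\cR_k}\prod_{i=1}^k a(r_i)^2h(r_i).
\]
Since $k\le x$, $\log D=x/4$ and $z=x^{\alpha_f}$ with $\alpha_f\ge7$, the error factor is $O(x^{2-\alpha_f})=o(1)$, uniformly in $H$, $S$ and $b$. It remains to prove $\Sigma=(1+o(1))A^k$.

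\emph{Step 3: removing the truncation.} The unrestricted sum over all tuples of divisors of $P$ equals $A^k$, and every term is nonnegative. Hence $\Sigma\le A^k$, and
\[
 A^k-\Sigma\le\Sigma_{\mathrm{big}}+\Sigma_{\mathrm{cop}},
\]
where $\Sigma_{\mathrm{big}}$ is the sum over tuples with $r_1\cdots r_k>D$ and $\Sigma_{\mathrm{cop}}$ is the sum over tuples that are not pairwise coprime.

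For $\Sigma_{\mathrm{big}}$, use Rankin's trick with exponent $\beta$. Then $\Sigma_{\mathrm{big}}\le D^{-\beta}A_\beta^k$. By \eqref{eq:coefficients-moments} and $A\ge1$,
\[
 A_\beta^k\le (A+C\beta)^k\le A^k e^{Ck\beta}.
\]
Since $D^{-\beta}=e^{-u}$ and the budget margin \eqref{eq:budget-margin} gives $Ck\beta\le u/4$, we obtain
\[
 \Sigma_{\mathrm{big}}\le A^k e^{-3u/4}=o(A^k),
\]
because $u=\tfrac14(\log x)^5\to\infty$.

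For $\Sigma_{\mathrm{cop}}$, a bad tuple has some prime $p\mid P$ dividing both $r_i$ and $r_l$ for some $i\ne l$. Take a union bound over $p$ and the at most $k^2$ pairs $(i,l)$, and sum the remaining $k-2$ coordinates freely, which contributes $A^{k-2}$. Then use $a(r)^2\le|a(r)|$, valid since $|a(r)|\le1$, together with \eqref{eq:coefficients-prime}. This gives
\[
 \Sigma_{\mathrm{cop}}\le k^2A^{k-2}\sum_{p>z}\Bigl(\sum_{p\mid r\mid P}|a(r)|h(r)\Bigr)^2
 \ll_f k^2A^k\sum_{p>z}p^{-2}\ll\frac{x^2}{z}A^k=o(A^k).
\]

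Combining Steps~1--3 yields \eqref{eq:S1-asymptotic}. All bounds are uniform in $H$, $S$ and $b$. The step most likely to need care is the Rankin bound in Step~3: it is essential that the loss $e^{Ck\beta}$ stays below $e^{u/4}$, which is exactly what the choice of $\eta_f$ in \eqref{eq:eta-choice} guarantees.
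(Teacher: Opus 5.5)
Your proposal is correct and follows the paper's proof essentially step for step. Like the paper, you pass from $S_1$ to $\E R^2$ via \eqref{eq:averaging-error}, apply Lemma~\ref{lem:diagonal} with $c_{\bm r}=\prod_i a(r_i)$, and then compare the truncated diagonal mass with $A^k$, using Rankin's bound together with \eqref{eq:budget-margin} and a union bound over noncoprime tuples based on \eqref{eq:coefficients-prime}. The only differences are cosmetic: you use $k^2$ instead of $\binom k2$, and $e^{-3u/4}$ instead of $e^{-u/2}$.
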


\begin{proof}
We begin with the diagonal contribution to $\E R^2$.
By \eqref{eq:weight},
\[
 R(t)=\sum_{\bm r\in\cR_k}c_{\bm r}\Psi_{\bm r}(t),
 \qquad
 c_{\bm r}=\prod_{i=1}^k a(r_i),
 \qquad
 \Psi_{\bm r}(t)=\prod_{i=1}^k\psi_{i,r_i}(t).
\]
Expanding the square and interchanging the finite sum with
the complete average gives
\[
 \begin{aligned}
 \E R^2
 &=\E\left(
   \sum_{\bm r\in\cR_k}c_{\bm r}\Psi_{\bm r}
   \sum_{\bm s\in\cR_k}c_{\bm s}\Psi_{\bm s}
   \right)\\
 &=\sum_{\bm r,\bm s\in\cR_k}
   c_{\bm r}c_{\bm s}\,
   \E(\Psi_{\bm r}\Psi_{\bm s}).
 \end{aligned}
\]
The total contribution of the diagonal terms is
\[
 \sum_{\bm r\in\cR_k}
 c_{\bm r}^{\,2}\E\Psi_{\bm r}^{\,2}.
\]
To evaluate it, fix $\bm r=(r_1,\ldots,r_k)\in\cR_k$ and put
$n=r_1\cdots r_k$. Each $r_i$ is a squarefree divisor
of $P$, and the coordinates are pairwise coprime.
Consequently, every prime divisor of $n$ occurs in
exactly one coordinate. By the definition of
$\psi_{i,r_i}$,
\[
 \Psi_{\bm r}(t)^2
 =\prod_{i=1}^k\prod_{p\mid r_i}\psi_{i,p}(t)^2.
\]
The prime moduli are distinct, so the Chinese remainder theorem
factors the complete average into local averages. By
$\E_p\psi_{i,p}^2=h(p)$ in \eqref{eq:local-moments},
\[
 \begin{aligned}
 \E\Psi_{\bm r}^{\,2}
 &=\prod_{i=1}^k\prod_{p\mid r_i}
                  \E_p\psi_{i,p}^{\,2}\\
 &=\prod_{i=1}^k\prod_{p\mid r_i}h(p)\\
 &=\prod_{i=1}^k h(r_i).
 \end{aligned}
\]
Coprimality ensures that each prime occurs at only one position,
so this calculation uses only independence across prime moduli.

Since
\[
 c_{\bm r}^{\,2}=\prod_{i=1}^k a(r_i)^2,
\]
the diagonal contribution is thus
\[
 \Delta
 :=\sum_{\bm r\in\cR_k}
       c_{\bm r}^{\,2}\E\Psi_{\bm r}^{\,2}
 =\sum_{\bm r\in\cR_k}
       \prod_{i=1}^k a(r_i)^2h(r_i).
\]
We call this nonnegative sum the diagonal mass of $R$.

The complete second moment is therefore
\[
 \E R^2
 =\Delta+
   \sum_{\substack{\bm r,\bm s\in\cR_k\\
                    \bm r\ne\bm s}}
       c_{\bm r}c_{\bm s}\,
       \E(\Psi_{\bm r}\Psi_{\bm s}).
\]
Lemma~\ref{lem:diagonal} bounds the second sum in terms of
$\Delta$. To estimate $\Delta$, first remove the coprimality and
product restrictions. The coordinates can then be summed separately,
and the definition of $A$ in \eqref{eq:coefficients} gives
\[
 \begin{aligned}
 \sum_{r_1\mid P}\cdots\sum_{r_k\mid P}
                       \prod_{i=1}^ka(r_i)^2h(r_i)
 &=\prod_{i=1}^k\left(\sum_{r_i\mid P}a(r_i)^2h(r_i)\right)
 =A^k.
 \end{aligned}
\]
Nonnegativity gives $0\le\Delta\le A^k$. A tuple missing from
$\Delta$ either has product greater than $D$ or has two coordinates
sharing a prime factor. We bound these contributions separately;
counting a tuple in both only increases the bound.

\smallskip
\noindent\emph{The mass beyond the product cutoff.}
If $r_1\cdots r_k>D$, then
$1\le(r_1\cdots r_k/D)^\beta$. Multiplying by this factor,
and then dropping the product restriction in a nonnegative sum,
gives
\[
 \begin{aligned}
 &\sum_{\substack{r_i\mid P\ (1\le i\le k)\\r_1\cdots r_k>D}}
                  \prod_{i=1}^ka(r_i)^2h(r_i)\\
 &\qquad\le D^{-\beta}
       \sum_{r_1\mid P}\cdots\sum_{r_k\mid P}
                  \prod_{i=1}^ka(r_i)^2h(r_i)r_i^\beta
 =D^{-\beta}A_\beta^k.
 \end{aligned}
\]
This is Rankin's bound.
Since $A_\beta\le A+C\beta$ and $A\ge1$, we obtain
\[
 A_\beta^k
 \le A^k\left(1+\frac{C\beta}{A}\right)^k
 \le A^k\exp(Ck\beta).
\]
Also $D^{-\beta}=e^{-u}$, and \eqref{eq:budget-margin} gives
$Ck\beta\le u/4$. In particular,
\begin{equation}\label{eq:tail}
 \sum_{\substack{r_i\mid P\ (1\le i\le k)\\r_1\cdots r_k>D}}
                       \prod_i a(r_i)^2h(r_i)
 \le D^{-\beta}A_\beta^k
 \le A^k\exp(-u+Ck\beta)
 \le A^ke^{-u/2}.
\end{equation}

\smallskip
\noindent\emph{The mass lost through noncoprime coordinates.}
For $k=1$ there is no coprimality restriction, so this contribution
is zero. Assume $k\ge2$. For each $p\mid P$, put
\[
 W_p=\sum_{\substack{r\mid P\\p\mid r}}a(r)^2h(r).
\]
The bound $|a(r)|\le1$ implies $a(r)^2\le|a(r)|$. Hence
\eqref{eq:coefficients-prime} yields
\[
 0\le W_p\le\sum_{\substack{r\mid P\\p\mid r}}|a(r)|h(r)
 \ll_f\frac1p.
\]
Fix a pair $i<j$ and a prime $p\mid P$. Among all unrestricted
tuples of divisors of $P$, the total mass with $p\mid r_i$
and $p\mid r_j$ is
\[
 \begin{aligned}
 &\sum_{\substack{r_\ell\mid P\ (1\le\ell\le k)\\p\mid r_i,\ p\mid r_j}}
                      \prod_{\ell=1}^k a(r_\ell)^2h(r_\ell)\\
 &\qquad=
 \left(\sum_{\substack{r_i\mid P\\p\mid r_i}}a(r_i)^2h(r_i)\right)
 \left(\sum_{\substack{r_j\mid P\\p\mid r_j}}a(r_j)^2h(r_j)\right)
 \prod_{\ell\ne i,j}\left(\sum_{r_\ell\mid P}a(r_\ell)^2h(r_\ell)\right)\\
 &\qquad=W_p^2A^{k-2}.
 \end{aligned}
\]
For $k=2$, the last product is empty and equals $1$.
Every tuple that is not pairwise coprime is counted for at least
one such pair and prime. The union bound therefore gives
\[
 \begin{aligned}
 &\sum_{\substack{r_\ell\mid P\ (1\le\ell\le k)\\
                  (r_i,r_j)>1\ \text{for some }i<j}}
                    \prod_{\ell=1}^k a(r_\ell)^2h(r_\ell)\le 
                    \sum_{1\le i<j\le k}\sum_{p|P}
                    \sum_{\substack{r_\ell\mid P\ (1\le\ell\le k)\\p\mid r_i,\ p\mid r_j}}
                      \prod_{\ell=1}^k a(r_\ell)^2h(r_\ell)\\
 &\qquad\le\binom{k}{2}A^{k-2}\sum_{p\mid P}W_p^2
 \ll_f k^2A^{k-2}\sum_{p\mid P}\frac1{p^2}
 \ll_f\frac{k^2}{z}A^k.
 \end{aligned}
\]
For completeness, the last estimate follows from $A\ge1$ and
\[
 \sum_{p\mid P}\frac1{p^2}
 \le\sum_{n>z}\frac1{n^2}
 \le\int_{\lfloor z\rfloor}^\infty\frac{dt}{t^2}
 =\frac1{\lfloor z\rfloor}\le\frac2z
 \qquad(z\ge2).
\]

\smallskip
\noindent\emph{Combining the diagonal and averaging estimates.}
Combining the cutoff estimate \eqref{eq:tail} with the preceding
bound for noncoprime coordinates proves
\[
 0\le A^k-\Delta\ll_f
 A^k\left(e^{-u/2}+\frac{k^2}{z}\right),
\]
or equivalently
\begin{equation}\label{eq:delta-estimate}
 \Delta=A^k\left(1+O_f\left(e^{-u/2}+\frac{k^2}{z}\right)\right).
\end{equation}
Apply Lemma~\ref{lem:diagonal} with $j=k$ and
$c_{\bm r}=\prod_{i=1}^ka(r_i)$. The function in that lemma
is then $R$, and its diagonal sum is precisely $\Delta$.
Since $\Delta\le A^k$, we have
\[
 \E R^2=\Delta+O_f\left(\frac{k\log D}{z}\Delta\right)
 =\Delta+O_f\left(\frac{k\log D}{z}A^k\right).
\]
Combining this with \eqref{eq:delta-estimate} and
\eqref{eq:averaging-error} gives
\[
 S_1=A^k\left(1+O_f\left(
 e^{-u/2}+\frac{k^2}{z}+\frac{k\log D}{z}\right)\right)
       +O(\mathcal E_x).
\]
All errors are small relative to $A^k$. Indeed,
\[
 e^{-u/2}=\exp\left(-\frac{(\log x)^5}{8}\right)\longrightarrow0,
 \qquad
 \frac{k^2}{z}\le x^{2-\alpha_f}\longrightarrow0,
 \qquad
 \frac{k\log D}{z}\le\frac14 x^{2-\alpha_f}\longrightarrow0.
\]
Finally $A^k\ge1$, so the additive error satisfies
$\mathcal E_x/A^k\le\mathcal E_x=o_f(1)$.
These bounds use only $k\le x$ and the fixed polynomial $f$,
so they hold uniformly in $H$, $S$, and $b$. This proves
\eqref{eq:S1-asymptotic}. In particular, $S_1\ge A^k/2>0$
for all sufficiently large $x$.
\end{proof}

\subsection{The contribution of uncovered positions}

\begin{lemma}\label{lem:S2}
Uniformly under the assumptions of Lemma~\ref{lem:translate},
\begin{equation}\label{eq:S2-bound}
 S_2\ll_f A^k k\left(e^{-u}+\frac{(\log D)^2}{z^2}\right)
                  +k\mathcal E_x.
\end{equation}
\end{lemma}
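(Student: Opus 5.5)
For each position $i$ we will show that
\[
 \frac1T\sum_{t=1}^T w(t)\ind_{(F_i(t),P)=1}\ll_f A^k\Bigl(e^{-u}+\frac{(\log D)^2}{z^2}\Bigr)+\mathcal E_x .
\]
Summing over $1\le i\le k$ then gives \eqref{eq:S2-bound}. The idea is to exploit the exact cancellation \eqref{eq:exact-cancellation}, which the truncation defining $\cR_k$ only partially destroys.

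\emph{Step 1: reduction to a truncation defect.} Fix $i$ and a $t$ at which position $i$ is uncovered. Then $\psi_{i,r}(t)=h(r)$ for all $r\mid P$. Split $\bm r\in\cR_k$ as $(\bm r',r_i)$, where $\bm r'$ lists the other $k-1$ coordinates and $n'=\prod_{\ell\ne i}r_\ell$. This gives
\[
 R(t)=\sum_{\bm r'}\Bigl(\prod_{\ell\ne i}a(r_\ell)\psi_{\ell,r_\ell}(t)\Bigr)\sum_{\substack{r_i\mid P,\ (r_i,n')=1\\ r_in'\le D}}a(r_i)h(r_i).
\]
Here $\bm r'$ ranges over the $(k-1)$-tuple analogue of $\cR_{k-1}$, with positions relabelled. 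Since $\sum_{r\mid P}a(r)h(r)=0$, the inner sum equals $-\beta(n')$, where
\[
 \beta(n')=\sum_{r\mid P,\ r\ \text{excluded}}a(r)h(r).
\]
A divisor $r$ is \emph{excluded} if $rn'>D$ or $(r,n')>1$. Hence $w(t)\ind_{\text{uncov}}=R_i^\ast(t)^2\ind_{\text{uncov}}\le R_i^\ast(t)^2$, where
\[
 R_i^\ast=\sum_{\bm r'}c^\ast_{\bm r'}\prod_{\ell\ne i}\psi_{\ell,r_\ell},\qquad c^\ast_{\bm r'}=-\beta(n')\prod_{\ell\ne i}a(r_\ell).
\]
This is a function of the same type as $R$, involving only the other $k-1$ positions.

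\emph{Step 2: finite to complete average, then the diagonal lemma.} Expand $R_i^\ast$ into indicators $\prod_{\ell\ne i}\ind_{d_\ell\mid F_\ell}$ exactly as in \eqref{eq:lambda-expansion}. By \eqref{eq:coefficients-prime} and the exact identity $\sum_{r>1}|a(r)|h(r)=1$, we have $|\beta(n')|\le 2$. The $\ell^1$ bound \eqref{eq:l1} therefore carries over with an extra constant factor, and the argument of Section~\ref{subsec:finite-averages} gives
\[
 \Bigl|T^{-1}\sum_{t\le T}(R_i^\ast)^2-\E (R_i^\ast)^2\Bigr|\ll\mathcal E_x .
\]
The two cases of disjoint roots and $\nu(q)$ residue classes are unchanged. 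Next apply Lemma~\ref{lem:diagonal} with $j=k-1$ and coefficients $c^\ast$. Since $k\log D/z=o(1)$, this yields
\[
 \E (R_i^\ast)^2\ll\sum_{\bm r'}\beta(n')^2\prod_{\ell\ne i}a(r_\ell)^2h(r_\ell).
\]

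\emph{Step 3: bounding $\beta(n')$.} We use $\beta^2\le 2\beta_1^2+2\beta_2^2$, where $\beta_1$ collects the excluded $r$ with $rn'>D$ and $\beta_2$ those with $(r,n')>1$.

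For $\beta_1$, Rankin's trick and \eqref{eq:coefficients-moments} give $|\beta_1|\le (n'/D)^\beta\bigl(1+\sum_r|a(r)|h(r)r^\beta\bigr)\ll(n'/D)^\beta$. Therefore
\[
 \sum_{\bm r'}\beta_1^2\prod a^2h\ll D^{-2\beta}A_{2\beta}^{k-1}\le A^ke^{-2u+2Ck\beta}\le A^ke^{-u},
\]
using \eqref{eq:budget-margin} and $A\ge1$.

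For $\beta_2$, \eqref{eq:coefficients-prime} gives $|\beta_2|\ll\sum_{p\mid n'}1/p\le\omega(n')/z\le\log D/(z\log z)$. Hence its contribution is $\ll A^{k-1}(\log D)^2/z^2$.

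The main obstacle is Step 2. We must check that $R_i^\ast$ really has an indicator expansion over tuples with product at most $D$ and a controlled $\ell^1$ norm, even though $\beta(n')$ depends on $n'$. The weight $\beta(n')$ is a bounded multiplier attached to each $\bm r'$ before expanding the $\psi$'s, so the triangle-inequality argument leading to \eqref{eq:l1} goes through with an extra factor $2$.
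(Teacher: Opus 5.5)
Your proposal is correct and follows the paper's proof essentially step by step. Your $-\beta(n')$ is the paper's $\epsilon(v)$, and the pointwise majorant, the passage from finite to complete average, the application of Lemma~\ref{lem:diagonal} with $j=k-1$, and the Rankin and prime-sum bounds for the two kinds of excluded divisors all coincide with the paper's; the only real difference is in Step~2, where you re-run the triangle-inequality argument with the bounded multiplier (in fact $|\beta(n')|\le 1$ directly, since every excluded $r$ exceeds $1$, so \eqref{eq:coefficients-prime} is not needed there), whereas the paper observes that the indicator coefficients of the new function are exactly the $\lambda_{\bm d}$ with $d_i=1$, so \eqref{SUM_R_SUBSET} applies at once — and you should avoid reusing the letter $\beta$ for both $1/\log y$ and the function $\beta(n')$.
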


\begin{proof}
By symmetry, it suffices to bound the contribution of position $k$.
All preceding estimates are unchanged by permuting the positions.

\smallskip
\noindent\emph{Step 1: replacing the factors of an uncovered position.}
For $v\mid P$ with $v\le D$, define
\[
 \epsilon(v)=
 \sum_{\substack{r\mid P\\(r,v)=1\\r\le D/v}}a(r)h(r).
\]
Set
\begin{equation}\label{eq:Rk-definition}
 R_k(t)=\sum_{\bm r\in\cR_{k-1}}
       \epsilon(r_1\cdots r_{k-1})
       \prod_{j=1}^{k-1}a(r_j)\psi_{j,r_j}(t).
\end{equation}
If $k=1$, the sum is over
the empty tuple and means the constant $R_1(t)=\epsilon(1)$.

Fix the first $k-1$ coordinates and put $v=r_1\cdots r_{k-1}$.
The last coordinate ranges over divisors $r\mid P$ with $(r,v)=1$
and $r\le D/v$: these are precisely the coprimality and product
conditions. Every tuple in $\cR_{k-1}$ admits the last coordinate
$1$. Grouping the terms of $R$ by their first $k-1$ coordinates
therefore gives
\[
 \begin{aligned}
 R(t)=\sum_{\bm r\in\cR_{k-1}}
 \left(\prod_{j=1}^{k-1}a(r_j)\psi_{j,r_j}(t)\right)
 \sum_{\substack{r\mid P\\(r,v)=1\\r\le D/v}}a(r)\psi_{k,r}(t),
 \qquad v=r_1\cdots r_{k-1}.
 \end{aligned}
\] Also, 
\[
 \begin{aligned}
 R_k(t)=\sum_{\bm r\in\cR_{k-1}}
 \left(\prod_{j=1}^{k-1}a(r_j)\psi_{j,r_j}(t)\right)
 \sum_{\substack{r\mid P\\(r,v)=1\\r\le D/v}}a(r)h(r),
 \qquad v=r_1\cdots r_{k-1}.
 \end{aligned}
\]If position $k$ is uncovered (that is $(F_{k}(t), P)=1$), then $\psi_{k,r}(t)=h(r)$ for
every $r\mid P$. The inner sums are equal, so $R(t)=R_k(t)$. We obtain
\begin{equation}\label{eq:uncovered-majorant}
 R(t)^2\ind_{(F_k(t),P)=1}\le R_k(t)^2.
\end{equation}
When the indicator is $1$ there is equality, and when it is $0$
the left-hand side is zero.

\smallskip
\noindent\emph{Step 2: averaging the new square.}
To compare the finite and complete averages of $R_k^2$,
regard the expansion of $R$ as a polynomial in the prime
divisibility indicators. Setting all indicators
$\ind_{p\mid F_k(t)}$, $p\mid P$, equal to zero replaces
$\psi_{k,p}(t)$ by $h(p)$ and hence $\psi_{k,r}(t)$ by $h(r)$ for every $r|P$.
By Step~1, the resulting expression is exactly $R_k$.
This is an algebraic substitution and does not assert the
existence of an integer $t$ realizing prescribed indicators.

In \eqref{eq:lambda-expansion}, the substitution deletes every
term with $d_k>1$, because such a term contains at least one
prime indicator for position $k$. If $d_k=1$, its last-coordinate
indicator is the constant $1$, and the term remains unchanged.
Thus
\[
 R_k(t)=\sum_{\bm d\in\cR_{k-1}}
       \lambda_{(d_1,\ldots,d_{k-1},1)}
       \prod_{j=1}^{k-1}\ind_{d_j\mid F_j(t)}.
\]
Appending a coordinate $1$ identifies $\cR_{k-1}$ with the
subset of $\cR_k$ having last coordinate $1$. Therefore
\[
 \sum_{\bm d\in\cR_{k-1}}
             |\lambda_{(d_1,\ldots,d_{k-1},1)}|
 \le\sum_{\bm d\in\cR_k}|\lambda_{\bm d}|\le D^{3/2}.
\]
Every retained coordinate product is still at most $D$, and the
residue-class count in Subsection~\ref{subsec:finite-averages}
applies to each pair of retained terms. Consequently, by \eqref{SUM_R_SUBSET}
\begin{equation}\label{eq:Rk-average}
 \begin{aligned}
 \left|\frac1T\sum_{t=1}^TR_k(t)^2-\E R_k^2\right|
 &\le\frac{D^{2\log m/\log z}}T
     \left(\sum_{\bm d\in\cR_{k-1}}
                 |\lambda_{(d_1,\ldots,d_{k-1},1)}|\right)^2\\
 &\le\mathcal E_x.
 \end{aligned}
\end{equation}
We have thus avoided averaging the uncovered-position indicator,
which may have period $P$.

\smallskip
\noindent\emph{Step 3: estimating the remaining coefficient sum.}
Compare $\epsilon(v)$ with the unrestricted sum
\[
 \sum_{r\mid P}a(r)h(r),
\]
over all divisors $r$ of $P$. This is zero by
\eqref{eq:exact-cancellation}.

For a fixed divisor $v\mid P$ with $v\le D$, the sum
\[
 \epsilon(v)
 =
 \sum_{\substack{r\mid P\\(r,v)=1\\r\le D/v}}
       a(r)h(r)
\]
retains the divisors $r$ satisfying $(r,v)=1$ and $r\le D/v$.
The omitted divisors satisfy
\[
 r>D/v
 \qquad\text{or}\qquad
 (r,v)>1.
\]
Partitioning the divisors of $P$ into retained and omitted terms gives
\[
 \begin{aligned}
 0
 &=\sum_{r\mid P}a(r)h(r)\\
 &=\sum_{\substack{r\mid P\\(r,v)=1\\r\le D/v}}
       a(r)h(r)
   +\sum_{\substack{r\mid P\\
          r>D/v\ \text{or}\ (r,v)>1}}
       a(r)h(r)\\
 &=\epsilon(v)
   +\sum_{\substack{r\mid P\\
          r>D/v\ \text{or}\ (r,v)>1}}
       a(r)h(r).
 \end{aligned}
\]
Here an omitted divisor satisfying both exclusion
conditions is still counted only once.
Rearranging the identity yields
\[
 \epsilon(v)
 =-\sum_{\substack{r\mid P\\
          r>D/v\ \text{or}\ (r,v)>1}}
       a(r)h(r).
\]

Every omitted term is negative. Indeed, $r=1$ is retained, since
\[
 (1,v)=1,
 \qquad
 1\le D/v.
\]
The second inequality follows from $v\le D$.
Thus every omitted divisor satisfies $r>1$, and hence
\[
 a(r)=-\frac1{B\log r}<0.
\]
Since $h(r)>0$ for every $r\mid P$, we conclude that
\[
 \epsilon(v)
 =
 \sum_{\substack{r\mid P\\
          r>D/v\ \text{or}\ (r,v)>1}}
       |a(r)|h(r)
 \ge0.
\]
An omitted divisor either satisfies $r>D/v$ or has a prime
$p\mid v$ dividing $r$. Nonnegativity and the union bound give
\[
 \epsilon(v)
 \le\sum_{\substack{r\mid P\\r>D/v}}|a(r)|h(r)
    +\sum_{p\mid v}\sum_{\substack{r\mid P\\p\mid r}}|a(r)|h(r).
\]

For the first sum, $r>D/v$ implies $1\le(vr/D)^\beta$.
Since $D/v\ge1$, none of its terms has $r=1$. Hence
\[
 \begin{aligned}
 \sum_{\substack{r\mid P\\r>D/v}}|a(r)|h(r)
 &\le\left(\frac vD\right)^\beta
          \sum_{\substack{r\mid P\\r>D/v}}|a(r)|h(r)r^\beta\\
 &\le\left(\frac vD\right)^\beta
          \sum_{\substack{r\mid P\\r>1}}|a(r)|h(r)r^\beta
 \le C\left(\frac vD\right)^\beta,
 \end{aligned}
\]
by \eqref{eq:coefficients-moments} with $\theta=\beta$.
For the second sum, \eqref{eq:coefficients-prime} gives
\[
 \sum_{p\mid v}\sum_{\substack{r\mid P\\p\mid r}}|a(r)|h(r)
 \ll_f\sum_{p\mid v}\frac1p.
\]
Every prime factor of $v$ exceeds $z$, and $v\le D$, so
\[
 \sum_{p\mid v}\frac1p
 \le\frac{\omega(v)}z
 \le\frac{\log v}{z\log z}
 \le\frac{\log D}{z\log z}.
\]
For $v=1$, the sum is empty and $\omega(v)=\log v=0$, so the
bound still holds. Combining the estimates, we obtain
\begin{equation}\label{eq:epsilon}
 0\le\epsilon(v)
 \ll_f\left(\frac vD\right)^\beta+\frac{\log D}{z\log z}
 \ll_f\left(\frac vD\right)^\beta+\frac{\log D}{z}.
\end{equation}
Applying $(a+b)^2\le2a^2+2b^2$ to \eqref{eq:epsilon} gives
\begin{equation}\label{EPS_v_^2}
 \epsilon(v)^2\ll_f
 D^{-2\beta}v^{2\beta}+\frac{(\log D)^2}{z^2}.
\end{equation}

\smallskip
\noindent\emph{Step 4: the complete second moment of $R_k$.}
The diagonal sum corresponding to \eqref{eq:Rk-definition} is
\[
 I_k=\sum_{\bm r\in\cR_{k-1}}
       \epsilon(v)^2\prod_{j=1}^{k-1}a(r_j)^2h(r_j),
 \qquad v=r_1\cdots r_{k-1}.
\]
For each tuple in this sum, $v\mid P$ and $v\le D$, so the
preceding estimate applies. Since
$v^{2\beta}=\prod_{j=1}^{k-1}r_j^{2\beta}$, it gives
\[
 \begin{aligned}
 I_k
 &\ll_f D^{-2\beta}
       \sum_{\bm r\in\cR_{k-1}}
             \prod_{j=1}^{k-1}a(r_j)^2h(r_j)r_j^{2\beta}\\
 &\qquad+\frac{(\log D)^2}{z^2}
       \sum_{\bm r\in\cR_{k-1}}
             \prod_{j=1}^{k-1}a(r_j)^2h(r_j).
 \end{aligned}
\]
Both sums have nonnegative terms. We may therefore remove the
coprimality and product restrictions while retaining $r_j\mid P$.
For $\theta\in\{0,2\beta\}$ this gives
\[
 \begin{aligned}
 \sum_{\bm r\in\cR_{k-1}}
             \prod_{j=1}^{k-1}a(r_j)^2h(r_j)r_j^\theta
 &\le\sum_{r_1\mid P}\cdots\sum_{r_{k-1}\mid P}
             \prod_{j=1}^{k-1}a(r_j)^2h(r_j)r_j^\theta\\
 &=\prod_{j=1}^{k-1}
       \left(\sum_{r_j\mid P}a(r_j)^2h(r_j)r_j^\theta\right)
 =A_\theta^{k-1}.
 \end{aligned}
\]
Consequently,
\[
 I_k\ll_f D^{-2\beta}A_{2\beta}^{k-1}
             +\frac{(\log D)^2}{z^2}A^{k-1}.
\]
When $k=1$, there is one empty tuple and $I_1=\epsilon(1)^2$.
The estimate \eqref{EPS_v_^2} for $\epsilon(v)^2$ at $v=1$ gives the same bound,
since $A_{2\beta}^0=A^0=1$.

Apply Lemma~\ref{lem:diagonal} with $j=k-1$ and
\[
 c_{\bm r}=\epsilon(r_1\cdots r_{k-1})
                         \prod_{j=1}^{k-1}a(r_j).
\]
The corresponding function is $R_k$, and its diagonal sum is $I_k$.
It follows that
\[
 \E R_k^2
 \le\left(1+O_f\left(\frac{k\log D}{z}\right)\right)I_k
 \ll_f I_k,
\]
where the multiplier is bounded for large $x$, because
$k\log D/z\le\frac14 x^{2-\alpha_f}\to0$.
The $j=0$ case of the lemma covers $k=1$ exactly.

To bound the term involving $A_{2\beta}$, use
$A_{2\beta}\le A+2C\beta$ and $A\ge1$:
\[
 \begin{aligned}
 D^{-2\beta}A_{2\beta}^{k-1}
 &\le e^{-2u}A^{k-1}
                  \left(1+\frac{2C\beta}{A}\right)^{k-1}\\
 &\le A^{k-1}\exp\bigl(-2u+2C(k-1)\beta\bigr)\\
 &\le A^k\exp(-2u+2Ck\beta)
 \le A^ke^{-u}.
 \end{aligned}
\]
Here \eqref{eq:budget-margin} gives $2Ck\beta\le u/2$,
and we used the weaker bound $2Ck\beta\le u$.
The remaining term is at most $(\log D)^2A^k/z^2$, since
$A^{k-1}\le A^k$. Thus
\begin{equation}\label{eq:Rk-complete-bound}
 \E R_k^2\ll_f A^k\left(e^{-u}+\frac{(\log D)^2}{z^2}\right).
\end{equation}

\smallskip
\noindent\emph{Step 5: summing over the uncovered positions.}
Combining the pointwise majorant \eqref{eq:uncovered-majorant}
with \eqref{eq:Rk-average} and \eqref{eq:Rk-complete-bound}, we get
\[
 \begin{aligned}
 \frac1T\sum_{t=1}^Tw(t)\ind_{(F_k(t),P)=1}
 &\le\frac1T\sum_{t=1}^TR_k(t)^2\\
 &\le\E R_k^2+\mathcal E_x\\
 &\ll_f A^k\left(e^{-u}+\frac{(\log D)^2}{z^2}\right)
                 +\mathcal E_x.
 \end{aligned}
\]
For any index $i$, relabel the positions so that $i$ is last.
The allowed tuples, coefficients, local moments, and averaging error
are unchanged, so the bound holds with the same constant for every
$i$. Interchanging the finite sums defining $S_2$ gives
\[
 \begin{aligned}
 S_2
 &=\sum_{i=1}^k\frac1T\sum_{t=1}^Tw(t)\ind_{(F_i(t),P)=1}\\
 &\ll_f kA^k\left(e^{-u}+\frac{(\log D)^2}{z^2}\right)
                 +k\mathcal E_x,
 \end{aligned}
\]
which is \eqref{eq:S2-bound}.
\end{proof}

\begin{proof}[Completion of the proof of Lemma~\ref{lem:translate}]
For all sufficiently large $x$, Lemma~\ref{lem:S1} gives
$S_1\ge A^k/2>0$. Dividing the estimate in Lemma~\ref{lem:S2}
by this lower bound, and then using $A^k\ge1$, gives
\[
 \begin{aligned}
 \frac{S_2}{S_1}
 &\ll_f k e^{-u}+\frac{k(\log D)^2}{z^2}
                   +\frac{k\mathcal E_x}{A^k}\\
 &\le O_f(1)\left(k e^{-u}
                   +\frac{k(\log D)^2}{z^2}+k\mathcal E_x\right).
 \end{aligned}
\]
Substituting $k\le x$, $u=(\log x)^5/4$, $\log D=x/4$,
and $z=x^{\alpha_f}$, we obtain
\[
 \frac{S_2}{S_1}
 \ll_f x\exp\left(-\frac{(\log x)^5}{4}\right)
       +x^{3-2\alpha_f}
       +x\exp\bigl(-x/4+o_f(x)\bigr)
 \longrightarrow0.
\]
Indeed, the logarithm of the first term is
$\log x-(\log x)^5/4\to-\infty$; the second term is at most
$x^{-11}$ because $\alpha_f\ge7$; and the third term is at most
$xe^{-x/8}$ for all sufficiently large $x$.
In particular, $S_2<S_1$.

Write
\[
 U(t)=\sum_{i=1}^k\ind_{(F_i(t),P)=1}.
\]
Then
\[
 0<S_1-S_2=\frac1T\sum_{t=1}^Tw(t)(1-U(t)).
\]
At least one summand is positive. For that integer $t$ we have
$w(t)>0$ and $U(t)<1$. As $U(t)$ is a nonnegative integer,
it must equal zero. Thus $(F_i(t),P)>1$ for every $i$.
Each such greatest common divisor has a prime factor $p\mid P$,
and this prime divides $F_i(t)=g(b+Qt+s_i)$.
By the definition of $P$, it satisfies $z<p\le y$ and $p>p_0$.
Formula \eqref{eq:divisibility-equivalence} therefore gives
$p\mid f(b+Qt+s_i)$, with $1\le t\le T\le e^x$, as required
by Lemma~\ref{lem:translate}.

\end{proof}

\section{Producing an interval of composite values}

\begin{proof}[Proof of Theorem~\ref{thm:main}]
Choose a fixed $C_f\ge1$ so that, for all sufficiently large $x$,
\[
 \sigma_f(x)\le\frac{C_f}{\log x}.
\]
Such a choice is possible by \eqref{eq:polynomial-mertens}.
Let $\eta_f$ be the constant from Lemma~\ref{lem:translate}, and set
\begin{equation}\label{eq:initial-length}
 \varepsilon_f=\frac{\eta_f}{4C_f},\qquad
 H=\lfloor\varepsilon_f x\log x\rfloor,
 \qquad Q=Q(x).
\end{equation}
Both $\varepsilon_f$ and $C_f$ depend only on $f$.
Since $\varepsilon_f x\log x\to\infty$, we have $H\ge1$ for
large $x$. Also $\varepsilon_f\le\log x$ eventually, so
$H\le\varepsilon_f x\log x\le x(\log x)^2$.
Thus $H$ lies in the range required by the covering lemma.

\smallskip
\noindent\emph{Step 1: covering most positions with the small primes.}
For each integer $b_0$ with $0\le b_0<Q$, define
\[
 S(b_0)=\{s\in[1,H]\cap\cZ:
             p\nmid g(b_0+s)\text{ for every }p_0<p\le x\}.
\]
Fix $s\in[1,H]\cap\cZ$ and count the $Q$ choices of $b_0$
for which $s\in S(b_0)$.

First consider a single prime $p$ with $p_0<p\le x$.
By the definition of $\rho(p)$, the congruence
\[
 g(u)\equiv0\pmod p
\]
has exactly $\rho(p)$ solutions modulo $p$.
The map $c\mapsto c+s$ is a bijection on the residue classes
modulo $p$: each residue $u$ is obtained from the unique residue
$c=u-s$. Hence the congruence
\[
 g(c+s)\equiv0\pmod p
\]
also has exactly $\rho(p)$ solutions modulo $p$.
There are therefore exactly $p-\rho(p)$ choices of
$b_0\bmod p$ for which $p\nmid g(b_0+s)$.

We now impose these conditions simultaneously for all primes
$p_0<p\le x$. Since
\[
 Q=\prod_{p\le x}p,
\]
the Chinese remainder theorem gives a bijection
\[
 \{0,\ldots,Q-1\}
 \longrightarrow
 \prod_{p\le x}\cZ/p\cZ,
 \qquad
 b_0\longmapsto (b_0\bmod p)_{p\le x}.
\]
Thus a choice of $c_p$ at each prime $p\le x$ determines a unique
$b_0\in\{0,\ldots,Q-1\}$ satisfying $b_0\equiv c_p\pmod p$
at every prime.

For each prime $p_0<p\le x$, we may choose $c_p$ from the
$p-\rho(p)$ residues found above. For primes $p\le p_0$,
the definition of $S(b_0)$ imposes no condition, so all $p$
residues are allowed. Thus the number of admissible choices
of $b_0$ is the product of these numbers of choices:
\[
 \#\{0\le b_0<Q:s\in S(b_0)\}
 =
 \left(\prod_{\substack{p\le x\\p\le p_0}}p\right)
 \left(\prod_{p_0<p\le x}(p-\rho(p))\right).
\]
Recall that $\rho(p)=0$ for $p\le p_0$. Consequently,
\[
 \begin{aligned}
 \#\{0\le b_0<Q:s\in S(b_0)\}
 &=\prod_{p\le x}(p-\rho(p))\\
 &=Q\prod_{p\le x}\left(1-\frac{\rho(p)}p\right)\\
 &=Q\,\sigma_f(x).
 \end{aligned}
\]
Equivalently, for every fixed $s\in[1,H]\cap\cZ$,
\[
 \frac1Q\sum_{b_0=0}^{Q-1}
       \mathbf{1}_{\{s\in S(b_0)\}}
 =\sigma_f(x).
\]

Summing over the positions and interchanging the finite sums gives
\[
 \begin{aligned}
 \frac1Q\sum_{b_0=0}^{Q-1}|S(b_0)|
 &=\frac1Q\sum_{b_0=0}^{Q-1}\sum_{s=1}^H
                                      \ind_{s\in S(b_0)}\\
 &=\sum_{s=1}^H\frac1Q\sum_{b_0=0}^{Q-1}
                                      \ind_{s\in S(b_0)}
 =H\sigma_f(x).
 \end{aligned}
\]
 By the choices
of $H$ and $\varepsilon_f$,
\[
 H\sigma_f(x)
 \le\varepsilon_f x\log x\,\frac{C_f}{\log x}
 =\frac{\eta_f x}{4}.
\]
Choose a representative $0\le b_0<Q$ for which the cardinality
does not exceed the average. Writing $S=S(b_0)$, we have
\begin{equation}\label{eq:initial-survivors}
 S=\{1\le s\le H:p\nmid g(b_0+s)
                         \text{ for all }p_0<p\le x\},
 \qquad |S|\le\frac{\eta_f x}{4}\le\eta_f x.
\end{equation}
For every $s\in\{1,\ldots,H\}\setminus S$, some prime $p\in(p_0,x]$ divides
$g(b_0+s)$. Since $p\mid Q$, this divisibility persists after
replacing $b_0$ by any integer congruent to it modulo $Q$.
Indeed, a polynomial with integer coefficients preserves
congruences, so
\[
 n\equiv b_0\pmod Q
 \quad\Longrightarrow\quad
 g(n+s)\equiv g(b_0+s)\equiv0\pmod p.
\]

\smallskip
\noindent\emph{Step 2: choosing the location and covering the survivors.}
For a sufficiently large real $X$, choose the auxiliary parameter
and the starting point by
\begin{equation}\label{eq:location}
 x=\frac13\log X,\qquad
 M=Q\left\lceil\frac{X}{2Q}\right\rceil,
 \qquad b=M+b_0.
\end{equation}
Use this value of $x$ in Step~1. Since $X\to\infty$ implies
$x\to\infty$, all preceding estimates hold once $X$ is large enough.
The definition of $M$ in \eqref{eq:location} and the ceiling
inequalities $v\le\lceil v\rceil<v+1$ give
\[
 \frac X2\le M<\frac X2+Q.
\]
Since $M$ is a multiple of $Q$, we have $b\equiv b_0\pmod Q$.
The integer $b$ may greatly exceed $Q$, but Lemma~\ref{lem:translate}
holds uniformly for every integer $b$.

We distinguish two cases according to the set $S$.

\smallskip
\noindent\emph{Case 1: $S=\varnothing$.}
Set $t=1$ and
\[
 N=b+Q=M+b_0+Q.
\]
For every $1\le s\le H$, the definition of $S$ supplies a prime
$p\in(p_0,x]$ with $p\mid g(b_0+s)$. Since
$N\equiv b_0\pmod Q$, the same prime divides $g(N+s)$.
It does not divide $D_f$, by the choice of $p_0$. As $f(N+s)$
is an integer and $g(N+s)=D_ff(N+s)$, it follows that
$p\mid f(N+s)$, as also follows from
\eqref{eq:divisibility-equivalence}. All positions are therefore
covered by small primes, and Lemma~\ref{lem:translate} is not needed.

\smallskip
\noindent\emph{Case 2: $S\ne\varnothing$.}
We now have $1\le|S|\le\eta_f x$ by
\eqref{eq:initial-survivors}, and $S\subseteq[1,H]\cap\cZ$ with
$1\le H\le x(\log x)^2$. All hypotheses of
Lemma~\ref{lem:translate} are therefore satisfied for the integer
$b$ chosen above. Let $1\le t\le e^x$ be supplied by that lemma,
and put
\[
 N=b+Qt=M+b_0+Qt.
\]
For $s\in S$, the lemma gives a prime divisor $p$ of $f(N+s)$
in $(z,y]$, where $z=x^{\alpha_f}$ and $y$ is given in
\eqref{eq:auxiliary-range}. For $s\in\{1,\ldots,H\}\setminus S$, choose the prime
$p\in(p_0,x]$ from Step~1. Since $N\equiv b_0\pmod Q$,
it divides $g(N+s)$ and hence $f(N+s)$ by
\eqref{eq:divisibility-equivalence}. Thus the second covering
stage preserves all divisibilities established by the first stage.

\smallskip
In either case, we obtain integers $N$ and $t$ satisfying
\[
 N=M+b_0+Qt,\qquad 1\le t\le e^x,
\]
and each $f(N+s)$, $1\le s\le H$, has a prime divisor at most
$y$. This includes the small-prime divisors, which are at most $y$
since $x<z<y$ for large $x$. We now check the location and length
of the interval and show that these divisors are proper.

\smallskip
\noindent\emph{Step 3: verifying that the interval lies in $(X/2,X]$.}
Since $t\ge1$ and $b_0\ge0$,
\[
 N=M+b_0+Qt\ge M+Q>\frac X2.
\]
Thus the lower endpoint $N+1$ is strictly greater than $X/2$.
For the upper endpoint, the bounds on $M$, $b_0$, and $t$ imply
\[
 N+H<\frac X2+2Q+Qe^x+H.
\]
The prime number theorem, in the form
$\sum_{p\le x}\log p=x+o(x)$, yields
\[
 \log Q(x)=\sum_{p\le x}\log p=(1+o(1))x.
\]
Since $X=e^{3x}$, it follows that
\[
 Q=X^{1/3+o(1)},\qquad Qe^x=X^{2/3+o(1)}=o(X).
\]
Also 
\[
 H\le\varepsilon_f x\log x
 =O_f(\log X\log\log X)=o(X).
\]
Thus $2Q+Qe^x+H=o(X)$ and is less than $X/2$ for large $X$.
It follows that $N+H<X$, and hence
\[
 \{N+1,\ldots,N+H\}\subset(X/2,X].
\]

\smallskip
\noindent\emph{Step 4: checking that the prime divisors are proper.}
Let $a>0$ be the leading coefficient of $f$, and recall that
$m=\deg f\ge1$. The lower-degree terms imply
\[
 f(n)=an^m+O_f(n^{m-1}),\qquad
 \frac{f(n)}{n^m}\longrightarrow a
 \quad(n\to+\infty).
\]
Thus there is a threshold depending only on $f$ such that
$f(n)\ge(a/2)n^m$ for every positive integer beyond it.
All arguments $N+s$ exceed $X/2$, so, uniformly for $1\le s\le H$,
\[
 f(N+s)\ge\frac a2(N+s)^m
          \ge\frac{a}{2^{m+1}}X^m>0
\]
once $X$ is sufficiently large.

On the other hand,
\[
 \frac{\log y}{\log X}
 =\frac{x}{(\log x)^5\log X}
 =\frac1{3(\log x)^5}\longrightarrow0.
\]
In particular, for large $X$ this ratio is at most $m/2$, giving
$y\le X^{m/2}$. Since $a/2^{m+1}$ is fixed and positive,
\[
 X^{m/2}<\frac{a}{2^{m+1}}X^m\le f(N+s)
\]
for all sufficiently large $X$. Every supplied prime divisor $p$
therefore satisfies
\[
 2\le p\le y<f(N+s).
\]
Thus $f(N+s)$ is
composite. The argument also applies when $m=1$.

\smallskip
\noindent\emph{Step 5: the length.}
The floor in \eqref{eq:initial-length} changes its argument by
less than $1$. Hence
\[
 \begin{aligned}
 H&=\varepsilon_f x\log x+O(1)\\
  &=\frac{\varepsilon_f}{3}\log X
                       (\log\log X-\log3)+O(1)\\
  &=\left(\frac{\varepsilon_f}{3}+o_f(1)\right)
                                      \log X\log\log X.
 \end{aligned}
\]
Take $c_f=\varepsilon_f/4>0$. We see that
$H\ge c_f\log X\log\log X$ for all sufficiently large $X$.
Choose $X_0(f)$ to satisfy this and the earlier thresholds, and
set $L=H$. For every real $X\ge X_0(f)$, we have constructed
the required interval in $(X/2,X]$, with all polynomial values
composite. This proves \eqref{eq:main} and
Theorem~\ref{thm:main}.
\end{proof}

\section{Acknowledgements}
This work is an output of a research project (HSE-BR-2025-84)
implemented as part of the Basic Research Program at HSE University.

\end{document}